\def\version{June 14, 2013}
\definecolor{MyDarkBlue}{rgb}{0,0.08,0.45}
\providecommand{\url}[1]{\small\textcolor{blue}{#1}}
\providecommand{\eprint}[1]{}
\renewcommand{\eprint}[1]{arXiv:\href{http://arxiv.org/abs/#1}{#1}}
\DeclareSymbolFont{extraup}{U}{zavm}{m}{n}
\DeclareMathSymbol{\vardiamond}{\mathalpha}{extraup}{87}
\providecommand{\ead}[1]{}
\providecommand{\journal}[1]{}
\DeclareSymbolFont{EUR}{U}{eur}{m}{n}
\DeclareSymbolFontAlphabet{\eur}{EUR}
\DeclareSymbolFont{EUB}{U}{eur}{b}{n}
\DeclareSymbolFontAlphabet{\eub}{EUB}
\DeclareSymbolFont{AMSb}{U}{msb}{m}{n}
\DeclareSymbolFontAlphabet{\mathbb}{AMSb}
\newcommand{\notyet}[1]{{}}
\newcommand{\eubD}{\eub{D}}
\newcommand{\eubL}{\eub{L}}
\newcommand{\eurL}{\eur{L}}
\newcommand{\eubJ}{\eub{J}}
\newcommand{\range}{\mathop{\rm Range\,}}
\newcommand{\p}{\partial}
\newcommand{\at}[1]{\vert\sb{\sb{#1}}}
\def\R{\mathbb{R}}
\newcommand{\C}{\mathbb{C}}
\newcommand{\N}{\mathbb{N}}
\newcommand{\abs}[1]{\vert #1 \vert}
\newcommand{\norm}[1]{\Vert #1 \Vert}
\newcommand{\sothat}{{\rm ;}\ }
\DeclareMathSymbol{\varGamma}{\mathord}{letters}{"00}
\DeclareMathSymbol{\varDelta}{\mathord}{letters}{"01}
\DeclareMathSymbol{\varTheta}{\mathord}{letters}{"02}
\DeclareMathSymbol{\varLambda}{\mathord}{letters}{"03}
\DeclareMathSymbol{\varXi}{\mathord}{letters}{"04}
\DeclareMathSymbol{\varPi}{\mathord}{letters}{"05}
\DeclareMathSymbol{\varSigma}{\mathord}{letters}{"06}
\DeclareMathSymbol{\varUpsilon}{\mathord}{letters}{"07}
\DeclareMathSymbol{\varPhi}{\mathord}{letters}{"08}
\DeclareMathSymbol{\varPsi}{\mathord}{letters}{"09}
\DeclareMathSymbol{\varOmega}{\mathord}{letters}{"0A}
\theoremstyle{plain}
\newtheorem{lemma}{Lemma}[section]
\newtheorem{theorem}[lemma]{Theorem}
\newtheorem{proposition}[lemma]{Proposition}
\theoremstyle{definition}
\newtheorem{definition}[lemma]{Definition}
\theoremstyle{remark}
\newtheorem{remark}[lemma]{Remark}
\renewenvironment{abstract}
  {\list{}{
  \setlength{\leftmargin}{1cm}
  \setlength{\rightmargin}{1cm}
  }%
  \item\relax \hskip 0pt {\it Abstract.}\ \small}
  {\endlist}
\makeatletter\@addtoreset{equation}{section}
\makeatletter\@addtoreset{lemma}{section}
\renewcommand{\Re}{\mathop{\rm{R\hskip -1pt e}}\nolimits}
\renewcommand{\Im}{\mathop{\rm{I\hskip -1pt m}}\nolimits}
\journal{Annales de l'Institut Henri Poincar\'e}
\begin{document}


\title{On linear instability of solitary waves
for the nonlinear Dirac equation
}

\author{
{\sc Andrew Comech}
\\
{\it\small Texas A\&M University, College Station, TX 77843, U.S.A.}
\\
{\it\small Institute for Information Transmission Problems,
Moscow 101447, Russia}
\\ \\
{\sc Meijiao Guan, \  Stephen Gustafson}
\\
{\it\small University of British Columbia,
Vancouver V6T 1Z2, Canada}
}

\date{\version}
\maketitle

\begin{abstract}
We consider the nonlinear Dirac equation,
also known as the Soler model:
\[
i\p\sb t\psi=-i\bm\alpha\cdot\bm\nabla\psi
+m\beta\psi
-(\psi\sp\ast\beta\psi)^k\beta\psi,
\quad
m>0,
\quad
\psi(x,t)\in\C^{N},
\quad
x\in\R^n,
\quad
k\in\N.
\]
We study the point spectrum
of linearizations at solitary waves
that bifurcate from NLS solitary waves
in the limit $\omega\to m$,
proving that if $k>2/n$,
then one positive and one negative eigenvalue
are present in the spectrum of the linearizations
at these solitary waves with $\omega$ sufficiently close
to $m$,
so that these solitary waves
are linearly unstable.
The approach is based on applying the Rayleigh--Schr\"odinger perturbation
theory to the nonrelativistic limit of the equation.
The results are in formal agreement with the
Vakhitov--Kolokolov stability criterion.

\medskip

\noindent
{\it R\'esum\'e.}
Nous consid\'erons l'\'equation de Dirac non lin\'eaire,
aussi connu comme le mod\`ele de Soler.
Nous \'etudions
le spectre ponctuel
des lin\'earisations
aux ondes solitaires des petites amplitudes 
dans la limite $\omega\to m$,
et montrons que
si $k>2/n$,
ensuite une valeur propre positive et une n\'egative
sont pr\'esents dans le spectre des lin\'earisations
\`a ces ondes solitaires
lorsque $\omega$ est suffisamment proche
de $m$,
ensuite ces ondes solitaires
sont lin\'eairement instable.
L'approche est bas\'ee sur l'application
de th\'eorie de la perturbation de Rayleigh--Schr\"odinger
\`a la limite non relativiste de l'\'equation.
Les r\'esultats sont en accord formel avec le
crit\`ere de stabilit\'e de Vakhitov--Kolokolov.
\end{abstract}


\section{Introduction}

A natural simplification
of the Dirac--Maxwell system
\cite{MR0190520}
is the nonlinear Dirac equation,
such as
the massive Thirring model \cite{MR0091788}
with vector-vector self-interaction
and
the Soler model \cite{PhysRevD.1.2766}
with scalar-scalar self-interaction
(known in dimension $n=1$
as the massive Gross--Neveu model
\cite{PhysRevD.10.3235,PhysRevD.12.3880}).
These models with self-interaction of local type
have been receiving a lot of attention in
particle physics (see e.g. \cite{rebbi1984solitons}),
as well as in the theory of Bose--Einstein
condensates \cite{MR2542748,PhysRevLett.104.073603}.

There is an enormous body of research
devoted to the nonlinear Dirac equation,
which we can not cover comprehensively here.
The existence of standing waves in the nonlinear Dirac equation was studied in
\cite{PhysRevD.1.2766},
\cite{MR847126}, \cite{MR949625}, and \cite{MR1344729}.
The question of stability
of solitary waves
is of utmost importance:
perturbations ensure
that we only ever encounter stable configurations.
Recent attempts at asymptotic stability
of solitary waves in the nonlinear Dirac equation
\cite{MR2259214,MR2466169,MR2985264,MR2924465,2012arXiv1203.2120C}
rely on the fundamental question
of \emph{spectral stability}:

\medskip
\noindent

\begin{verse}
{\it
Consider the Ansatz
$\psi(x,t)=(\phi\sb\omega(x)+\rho(x,t))e^{-i\omega t}$,
with $\phi\sb\omega(x)e^{-i\omega t}$ a solitary wave solution.
Let $\p\sb t\rho=A\sb\omega\rho$ be the linearized equation for $\rho$.
Does
$A\sb\omega$ have
eigenvalues
in the right half-plane?
}
\end{verse}

\medskip

\begin{definition}
If $\sigma(A\sb\omega)\subset i\R$,
we say that the solitary wave
$\phi\sb\omega e^{-i\omega t}$
is spectrally stable.
Otherwise, we say that the solitary wave is
linearly unstable.
\end{definition}

\begin{remark}
Let us note that the ``spectral stability''
$\sigma(A\sb\omega)\subset i\R$ does not guarantee stability.
One of the possibilities which may still lead to instability
is the presence of eigenvalues of higher algebraic multiplicity.
This occurs e.g. at $\lambda=0$
in the case $dQ(\omega)/d\omega=0$ 
(where $Q(\omega)$ is the charge of 
$\phi_\omega$-- see below) at a particular value of $\omega$,
leading to instability of the corresponding solitary wave;
see \cite{MR1995870}.
In \cite{MR2571965},
to reflect this situation in the context
of the nonlinear Schr\"odinger equation,
linear instability is defined as
either the presence of eigenvalues with positive real part
or the presence of particular
eigenvalues with higher algebraic multiplicity.
\end{remark}

\medskip


There is a very clear picture
of the spectral stability for
nonlinear Schr\"odinger and Klein--Gordon equations
\cite{VaKo,MR723756,MR783974,MR804458}
and general results for abstract Hamiltonian systems
with $\mathbf{U}(1)$ symmetry \cite{MR901236},
which stimulated
many attempts at spectral stability
in the nonlinear Dirac context.
We mention the numerical simulations
\cite{PhysRevLett.50.1230}
and the analysis of the energy minimization
under charge-preserving dilations
and similar transformations
\cite{MR592382,PhysRevD.34.644,MR848095,PhysRevE.82.036604}.
In spite of this,
the question
of spectral stability
of solitary waves of nonlinear Dirac equation
is still completely open.
Numerical results
\cite{MR2892774}
show that in the 1D Soler model (cubic nonlinearity)
all solitary waves are spectrally stable.
We also mention the related numerical results in
\cite{MR2217129,chugunova-thesis}.

According to \cite{VaKo},
if $\phi\sb\omega(x)e^{-i\omega t}$ is a family
of solitary wave solutions
to the nonlinear Schr\"odinger equation
\[
i\dot u=-\frac{1}{2m}\Delta u-f(\abs{u}^2)u,
\qquad
u(x,t)\in\C,
\quad
x\in\R^n,
\quad
n\ge 1,
\]
where $f$ is smooth and real-valued,
and if $\phi\sb\omega$ have
no nodes (such solitary waves are called \emph{ground states}),
then the linearization at the solitary wave
corresponding to a particular value of $\omega$
has a positive eigenvalue
if and only if at this value of $\omega$
one has
$dQ(\omega)/d\omega>0$, where
$Q(\omega)=\norm{\phi\sb\omega}\sb{L^2}^2$ is the
charge (or \emph{mass}) of the solitary wave.
The opposite condition,
\begin{equation}\label{vk}
\frac{d}{d\omega}Q(\omega)<0,
\end{equation}
is called the Vakhitov--Kolokolov stability criterion;
it ensures the absence of eigenvalues
with positive real part.
In the case of the nonlinear Dirac equation,
the condition \eqref{vk}
gives a less definite answer
about the spectral stability.
All we know is that
at the value of $\omega$
where
$\p\sb\omega Q(\omega)$ vanishes,
with $Q(\omega)$ being the charge
of the solitary wave
$\phi\sb\omega e^{-i\omega t}$,
two eigenvalues
of the linearized equation
collide at $\lambda=0$,
but we do not know where these eigenvalues
are located when $\p\sb\omega Q(\omega)\ne 0$
(see e.g. \cite{dirac-vk}).



Yet, it is natural to expect
that the condition \eqref{vk}
remains meaningful in the nonrelativistic limit,
as it was suggested in \cite{PhysRevE.82.036604}.
While it is a common practice
to obtain solitary wave solutions
for relativistic equations
as bifurcations
from the solitary waves
to the equation corresponding to the nonrelativistic limit
(see e.g. the review \cite{MR2434346}),
we show that the spectrum
of the linearization at a solitary wave
could also be learned from the nonrelativistic limit.
More precisely,
we develop the idea that the family of real eigenvalues
of the linearization
at a solitary wave
of the nonlinear Dirac equation
bifurcating from $\lambda=0$
is a deformed family of eigenvalues
of the linearization of the corresponding nonlinear Schr\"odinger equation.
As a result,
we prove that
if the Vakhitov--Kolokolov stability criterion \cite{VaKo}
guarantees linear instability
for the NLS,
then the same conclusion also holds for
solitary waves with $\omega\lesssim m$ in the nonlinear Dirac equation.
Let us mention that our results only apply
to the solitary wave solutions
which we obtain
from the solitary waves of the nonlinear Schr\"odinger
equation in the nonrelativistic limit of the nonlinear Dirac.


The model and the main results
are described in Section~\ref{sect-results}.
The necessary constructions
in the context of the nonlinear Schr\"odinger equation
are presented in Section~\ref{sect-nls}.
The existence and asymptotics of solitary waves
of the nonlinear Dirac equation
is covered in Section~\ref{sect-linearization}.
The main result (Theorem~\ref{main-theorem-dirac-vbk})
follows from
Lemma~\ref{lemma-xx-ff-3}
(existence of solitary wave
solutions and their asymptotics)
and 
Proposition~\ref{prop-k3} (presence of a positive eigenvalue
in the spectrum of the linearized operator),
which we prove using the Rayleigh--Schr\"odinger perturbation theory.

\section{Main result}
\label{sect-results}

We consider the nonlinear Dirac equation
\begin{equation}\label{nld-nd}
i\p\sb t\psi=-i\bm\alpha\cdot\bm\nabla\psi
+m\beta\psi
-f(\psi\sp\ast\beta\psi)\beta\psi,
\qquad
\psi(x,t)\in\mathbb{C}^N,
\qquad
x\in\R^n,
\end{equation}
with
$m>0$
and
$\psi\sp\ast$
being the Hermitian conjugate of $\psi$.
We assume that the nonlinearity $f(s)$
is smooth and real-valued,
and that
\begin{equation}\label{f-zero}
f(0)=0.
\end{equation}
Above,
$
\bm\alpha\cdot\bm\nabla
=\sum\limits\sb{j=1}\sp{n}
\alpha\sb j\frac{\p}{\p x\sb j},
$
and the Hermitian matrices $\alpha\sb j$ and $\beta$
are chosen so that
\[
(-i\bm\alpha\cdot\bm\nabla+\beta m)^2=(-\Delta +m^2)I\sb{N},
\]
where $I\sb{N}$ is the $N\times N$ unit matrix.
That is, $\alpha_j$ and $\beta$ are to satisfy
\begin{equation}
\label{eq:def-Dirac-matrices}
\alpha_j \alpha_k + \alpha_k \alpha_j
= 2 \delta_{jk} I\sb{N},
\qquad
\beta^2=I\sb{N};
\qquad
\alpha_j \beta+\beta\alpha_j=0.
\end{equation}
The generalized massive Gross--Neveu model
(the scalar-scalar case with $k>0$
in the terminology of \cite{PhysRevE.82.036604})
corresponds to the nonlinearity $f(s)=\abs{s}^k$.

According to the Dirac--Pauli theorem
(cf. \cite{1928RSPSA.117..610D,vanderwaerden-1932,MR1508031}
and \cite[Lemma 2.25]{thaller}),
the particular choice of the matrices $\alpha_j$ and $\beta$ does not matter:

\begin{lemma}[Dirac--Pauli theorem]
\label{lemma-pauli}
Let $n\in\N$.
For any sets of Dirac matrices
$\alpha_j$ $\beta$ and  $\tilde\alpha_j$, $\tilde\beta$
of the same dimension $N$,
with $1\le j\le n$,
there is a unitary matrix $S$
such that
\begin{equation}\label{pauli-odd}
\tilde\alpha_j=S^{-1} \alpha_j S,
\quad
1\le j\le n,
\qquad
\tilde\beta=S^{-1}\beta S
\end{equation}
if $n$ is odd,
and such that
\begin{equation}\label{pauli-even}
\tilde\alpha_j=\sigma S^{-1}\tilde\alpha_j S,
\quad
1\le j\le n,
\qquad
\tilde\beta=\sigma S^{-1}\beta S,
\qquad
\sigma=\pm 1,
\end{equation}
if $n$ is even.
\end{lemma}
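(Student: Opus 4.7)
The defining relations \eqref{eq:def-Dirac-matrices} identify $\alpha_1,\ldots,\alpha_n,\beta$ as $n+1$ anticommuting Hermitian involutions, hence as the images of generators $e_1,\ldots,e_{n+1}$ (with $e_j^2=1$) under a $*$-representation $\rho:\mathrm{Cl}(n+1,\C)\to\mathrm{End}(\C^N)$ of the complex Clifford algebra, via $\rho(e_j)=\alpha_j$ for $1\le j\le n$ and $\rho(e_{n+1})=\beta$; similarly $\tilde\rho$ from the tilde set. My plan is to invoke the structure theory of complex Clifford algebras to show $\rho\cong\tilde\rho$ (up to a global sign when $n$ is even), and then polar-decompose the resulting intertwiner into a unitary.

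Recall the classical decompositions
\[
\mathrm{Cl}(2k,\C)\cong M_{2^k}(\C),\qquad \mathrm{Cl}(2k+1,\C)\cong M_{2^k}(\C)\oplus M_{2^k}(\C),
\]
so in the first case there is a unique irreducible representation, while in the second there are exactly two, distinguished by the sign $\pm 1$ taken by a suitable normalization of the central volume element $\omega=e_1\cdots e_{n+1}$. For $n$ odd ($n+1=2k$), both $\rho$ and $\tilde\rho$ split as $N/2^k$ copies of the unique irreducible representation, so there is an invertible $T$ with $T^{-1}\rho(x)T=\tilde\rho(x)$ for every $x\in\mathrm{Cl}(n+1,\C)$; evaluating at $x=e_j$ and $x=e_{n+1}$ yields \eqref{pauli-odd}. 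For $n$ even ($n+1=2k+1$), decompose $\rho=\rho_+\oplus\rho_-$ and $\tilde\rho=\tilde\rho_+\oplus\tilde\rho_-$ into the two isotypic parts (the sign of $\rho(\omega)$ distinguishing them). If the multiplicity pairs agree, an intertwiner exists and \eqref{pauli-even} holds with $\sigma=+1$. If not, replacing every $\tilde\alpha_j,\tilde\beta$ by its negative negates $\omega$ (since $n+1$ is odd), swapping $\tilde\rho_+\leftrightarrow\tilde\rho_-$, and reducing to the matched case with $\sigma=-1$.

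Finally, to promote the invertible $T$ to a unitary $S$: because the generators are Hermitian, $\rho$ and $\tilde\rho$ are $*$-representations, so $T^*T$ commutes with $\tilde\rho$ and by Schur's lemma is a positive scalar on each isotypic component; then $S=T(T^*T)^{-1/2}$ is unitary and intertwines the same data. The only delicate ingredient is the sign bookkeeping in the even-$n$ case via the volume element; everything else is a direct invocation of the Clifford algebra classification, so I expect the volume-element step to be the one that requires the most care in writing up.
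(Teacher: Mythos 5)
The paper does not actually prove Lemma~\ref{lemma-pauli}: it cites the original papers of Dirac, van~der~Waerden and Pauli, and Lemma~2.25 of Thaller's book, and refers the reader elsewhere for details. So there is no in-paper argument to compare against; your Clifford-algebra route (structure theorem for $\mathrm{Cl}(n+1,\C)$, Schur, then polar decomposition to unitarize the intertwiner) is the standard modern proof of this result, and it is essentially sound. There is, however, one genuine gap in the even-$n$ case.

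You write: if the isotypic multiplicity pairs $(m_+,m_-)$ and $(\tilde m_+,\tilde m_-)$ disagree, negate the tilde generators to swap $\tilde m_+\leftrightarrow\tilde m_-$ and ``reduce to the matched case.'' But ``agree or swap'' is not a dichotomy for arbitrary $N$. Take $n=2$, so $\mathrm{Cl}(3,\C)\cong M_2(\C)\oplus M_2(\C)$ with two inequivalent $2$-dimensional irreducibles, and $N=4$: one may realize one generator set as two copies of the ``$+$'' irreducible and the other as one copy of each. Then $\rho(\omega)$ is a scalar matrix while $\tilde\rho(\omega)$ has signature $(2,2)$; since any unitary $S$ with $S^{-1}\alpha_j S=\sigma\tilde\alpha_j$, $S^{-1}\beta S=\sigma\tilde\beta$ would give $S^{-1}\rho(\omega)S=\sigma^{n+1}\tilde\rho(\omega)=\sigma\tilde\rho(\omega)$, the eigenvalue multiplicities would have to match --- and they do not. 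So the lemma, and your proof of it, are correct only under the additional hypothesis that $N$ is the minimal dimension $2^{\lfloor(n+1)/2\rfloor}$, so that $\rho$, $\tilde\rho$ are irreducible and exactly one of $m_\pm$ equals $1$. That is the only regime the paper invokes ($N=2$ for $n=1,2$; $N=4$ for $n=3$), so nothing downstream is affected, but the hypothesis should be made explicit in your write-up (and arguably in the lemma statement itself). A minor separate point: after invoking Schur you say $T^*T$ is ``a positive scalar on each isotypic component''; for multiplicity $>1$ the commutant of an isotypic component is a full matrix algebra, not scalars. This is harmless --- once $T^*T$ commutes with $\tilde\rho$, so does $(T^*T)^{-1/2}$ by the functional calculus, hence $S=T(T^*T)^{-1/2}$ is unitary and still intertwines --- and in the irreducible regime just identified it is literally true anyway.
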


For more details, see \cite{dirac-spectrum}.

Lemma~\ref{lemma-pauli}
allows one, by a simple change of variable, to transform the nonlinear Dirac equation,
changing the set of Dirac matrices.
Thus, when studying the spectral stability,
we can choose the Dirac matrices at our convenience.
We use the standard Pauli matrices,
\begin{equation}\label{def-pauli-matrices}
\sigma\sb 1=\left(\begin{matrix}0&\,\,1\\1&\,\,0\end{matrix}\right),
\qquad
\sigma\sb 2=\left(\begin{matrix}0&-i\\i&0\end{matrix}\right),
\qquad
\sigma\sb 3
=\left(\begin{matrix}1&0\\0&-1\end{matrix}\right),
\end{equation}
to make the following choice:
\begin{equation}\label{dm-1}
n=1: \quad N=2, \quad \alpha = -\sigma_2;
\end{equation}
\begin{equation}\label{dm-2}
n=2: \quad N=2, \quad \alpha\sb j = \sigma_j,
\quad
1\le j\le 2;
\end{equation}
\begin{equation}\label{dm-3}
n=3: \quad N=4, \quad \alpha\sb j
=\left(\begin{matrix}0&\sigma\sb j\\ \sigma\sb j&0\end{matrix}\right),
\quad
1\le j\le 3;
\end{equation}
and in all these cases we choose
\begin{equation}\label{dm-beta}
\beta
=\left(\begin{matrix}I\sb{N/2}&0\\0&-I\sb{N/2}\end{matrix}\right).
\end{equation}

\begin{remark}
If $n$ is even,
Lemma~\ref{lemma-pauli}
may only allow us to transform the equation \eqref{nld-nd}
with a particular set of the Dirac matrices
to the set of the Dirac matrices
as in \eqref{dm-1}, \eqref{dm-2}, \eqref{dm-3},
and \eqref{dm-beta},
but with the opposite signs
(this corresponds to $\sigma=-1$ in \eqref{pauli-even}).
In this case,
the signs of $\alpha_j$ are flipped by
taking the spatial reflections,
while $\beta$ being opposite to \eqref{dm-beta}
corresponds to considering the nonrelativistic limit
$\omega\to -m$,
with appropriate changes to
Theorem~\ref{main-theorem-dirac-vbk}.
\end{remark}

For a large class of nonlinearities $f(s)$,
there are solitary wave solutions
of the form
\begin{equation}\label{sw}
\psi(x,t)=\phi\sb\omega(x)e^{-i\omega t},
\qquad
\phi\sb\omega
\in H^1(\R,\C^{N}),
\qquad
\abs{\omega}<m.
\end{equation}
In dimension $n=1$,
one can take
\begin{equation}
\label{sol-forms-1}
\phi\sb\omega(x) =\begin{bmatrix}v(x,\omega)\\u(x,\omega)\end{bmatrix},
\end{equation}
with
$v(x,\omega)$ positive and even and $u(x,\omega)$ real-valued and odd;
under these conditions, the solitary wave
$\phi_\omega(x)$ is unique (see
Section~\ref{sect-solitary-waves-1d} for details).

For $n = 2$ and $n=3$, respectively,
\begin{equation}\label{sol-forms-23}
\phi\sb\omega(x) =\begin{bmatrix}v(r,\omega)\\
i e^{i\phi} u(r,\omega)\end{bmatrix},
\qquad
\phi\sb\omega(x) =
\begin{bmatrix}
v(r,\omega)
\begin{pmatrix}
1\\0
\end{pmatrix}
\\
i u(r,\omega)
\begin{pmatrix}
\cos\theta
\\
e^{i\phi}\sin\theta
\end{pmatrix}
\end{bmatrix},
\end{equation}
where $v(r,\omega)$ and
$u(r, \omega)$ are real-valued, radially-symmetric functions;
$(r,\phi)$ are standard polar coordinates in $\R^2$, and $(r,\theta,\phi)$ are
standard spherical coordinates in $\R^3$.
The existence of solitary waves of this form
is proved, for example, in~\cite{MR847126}.
The particular solutions we consider here,
however, are those constructed for $\omega$ close to $m$, as outlined in
Section~\ref{sect-sw-hd},
from the nonrelativistic limit $\omega \to m$.

Due to the $\mathbf{U}(1)$-invariance,
for solutions to \eqref{nld-nd}
the value of the charge functional
\[
Q(\psi)=\int\sb{\R^n}\abs{\psi(x,t)}^2\,dx
\]
is formally conserved.
For brevity,
we also denote by $Q(\omega)$
the charge of the solitary wave
$\phi\sb\omega(x)e^{-i\omega t}$:
\begin{equation}\label{def-q-omega}
Q(\omega)=\int\sb{\R^n}\abs{\phi\sb\omega(x)}^2\,dx.
\end{equation}

We are interested in the spectrum of linearization
of the nonlinear Dirac equation \eqref{nld-nd}
at a solitary wave solution \eqref{sw}.

\begin{theorem}\label{main-theorem-dirac-vbk}
Let $n\le 3$.
Assume that $f(s)=s^k$,
where $k\in\N$ satisfies $k > 2/n$
(and $k < 2$ for $n=3$).
Then there is $\omega\sb 1<m$
such that the solitary wave solutions
$\phi\sb\omega e^{-i\omega t}$
to \eqref{nld-nd} described above
are linearly unstable for 
$\omega\in(\omega\sb 1,m)$.
More precisely,
let $A\sb\omega$ be the linearization
of the nonlinear Dirac equation
at a solitary wave $\phi\sb\omega(x)e^{-i\omega t}$.
Then
for $\omega\in(\omega_1,m)$
there are eigenvalues
\[
\pm\lambda\sb\omega\in\sigma\sb{p}(A\sb\omega),
\qquad
\lambda\sb\omega>0,
\qquad
\lambda\sb\omega=O(m-\omega).
\]
\end{theorem}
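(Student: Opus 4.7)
The plan is to pass to the nonrelativistic limit, rescale the linearized operator so that its leading part is block-diagonal with one block equal (up to conjugation) to the linearization of the corresponding NLS equation at its ground state, and then use Rayleigh--Schr\"odinger perturbation theory to persist the pair of real NLS eigenvalues $\pm\Lambda\sb 0$ (whose existence for $k>2/n$ is exactly the Vakhitov--Kolokolov instability for NLS) as the claimed pair $\pm\lambda\sb\omega=O(m-\omega)$ in $\sigma\sb p(A\sb\omega)$.

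Setting $\epsilon\sp 2=2m(m-\omega)>0$, Lemma~\ref{lemma-xx-ff-3} provides the asymptotic profile $v(x,\omega)\sim\epsilon\sp{1/k}\Phi(\epsilon x)$ and $u(x,\omega)\sim\epsilon\sp{1+1/k}(2m)\sp{-1}(-i\bm\alpha\cdot\bm\nabla)\Phi(\epsilon x)$, where $\Phi$ is the ground state of $-\tfrac{1}{2m}\varDelta\Phi+\Phi-\Phi\sp{2k+1}=0$ on $\R\sp n$. Writing $\psi=(\phi\sb\omega+\rho)e\sp{-i\omega t}$ and splitting $\rho=\rho\sb 1+i\rho\sb 2$ produces an $\R$-linear operator $A\sb\omega$ on $L\sp 2(\R\sp n,\R\sp{2N})$. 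Dilating $x\mapsto x/\epsilon$, rescaling each component by the appropriate power of $\epsilon$, and dividing the spectral parameter by $\epsilon\sp 2$, I would obtain an expansion
\begin{equation*}
\Tau\sb\epsilon=\Tau\sb 0+\epsilon\sp 2\Tau\sb 1+O(\epsilon\sp 4),
\end{equation*}
in which $\Tau\sb 0$ is block-diagonal: one $2\times 2$ block is (up to a constant conjugation) the standard NLS linearization $J\mathop{\rm diag}(L\sb+,L\sb-)$ at $\Phi$, while the other block, arising from the mass term $2m\beta$ acting on the small lower components, is a bounded invertible operator with spectral gap of order one.

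Classical NLS theory tells us that for $k>2/n$ the operator $L\sb-L\sb+$ has a simple negative eigenvalue on the subspace orthogonal to the generalized $\mathbf{U}(1)$ and translational kernel, producing a pair of simple real eigenvalues $\pm\Lambda\sb 0\ne 0$ of $\Tau\sb 0$. A Lyapunov--Schmidt (Feshbach) reduction onto the two-dimensional invariant subspace spanned by the corresponding eigenvectors then yields an effective matrix of the form $\mathop{\rm diag}(\Lambda\sb 0,-\Lambda\sb 0)+O(\epsilon\sp 2)$, whose eigenvalues persist as a real pair $\pm\Lambda\sb\epsilon\ne 0$. Undoing the rescaling gives $\pm\lambda\sb\omega\in\sigma\sb p(A\sb\omega)$ with $\lambda\sb\omega=\epsilon\sp 2\Lambda\sb\epsilon+o(\epsilon\sp 2)=O(m-\omega)$, which is the desired conclusion.

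The hard part will be handling the degeneracy of the zero eigenspace: gauge, translational, and (for $n\ge 2$) rotational invariance all contribute generalized zero modes to both $A\sb\omega$ and $\Tau\sb 0$, and the NLS block of $\Tau\sb 0$ carries its own Jordan structure at zero. One must show that $\pm\Lambda\sb 0$ is genuinely separated from these kernels in the rescaled problem, and that the Feshbach reduction onto the corresponding two-dimensional subspace is legitimate uniformly in $\epsilon$; this requires controlling the resolvent of the relativistic block of $\Tau\sb 0$ on the symmetry-orthogonal complement and checking that the $O(\epsilon\sp 2)$ corrections do not push the small eigenvalue back through zero. The strict inequality $k>2/n$ (excluding the $L\sp 2$-critical case) is precisely what keeps $\Lambda\sb 0$ bounded away from zero and allows the perturbative expansion to close.
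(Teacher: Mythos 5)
Your overall strategy---nonrelativistic limit, rescaling of the linearized operator so that the leading part is the NLS linearization $J\,\mathrm{diag}(L_+,L_-)$, then perturbation theory to persist the Vakhitov--Kolokolov real pair $\pm\Lambda_0$---is the same as the paper's. The normalization $\epsilon^2=2m(m-\omega)$ versus the paper's $\epsilon^2=m^2-\omega^2$ is immaterial.

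The one substantive difference is that you work on the full space $L^2(\R^n,\R^{2N})$, and the degeneracy issue you flag at the end (gauge, translation, rotation zero modes, plus the Jordan structure at $\lambda=0$ in the NLS block) is not a side technicality---it is precisely the thing the paper avoids by design. The paper observes that the ansatz classes \eqref{sol-forms-1}, \eqref{sol-forms-23} are invariant under the NLD flow, so instability of the solitary wave as a solution of the reduced $2$-component radial system \eqref{reduced} already implies instability in the full problem. Restricting to this class kills the translational and (for $n\ge 2$) rotational kernels outright, and in the radial sector $\ker \eur{l}_{+,rad}=\{0\}$ (Lemma~\ref{lemma-vk}), which is what makes $\eub{H}^{-1}$ bounded in Lemma~\ref{lemma-xx-ff-3} and lets the fixed-point/Feshbach step in Proposition~\ref{prop-k3} close cleanly with a genuine contraction. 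Without this restriction your ``limit operator'' $\Tau_0$ has a nontrivial generalized kernel from the symmetries, and although $\pm\Lambda_0\ne 0$ is spectrally separated from $0$, you would still need to track how the $O(\epsilon^2)$ corrections interact with those kernel modes (they supply $O(\epsilon^2)$ off-diagonal coupling in a Lyapunov--Schmidt reduction) and verify uniform invertibility of the off-range block --- all of which you leave as ``the hard part.'' So your proposal is directionally correct and identifies the right obstruction, but it omits the structural reduction that is doing the real work in the paper's argument: instead of confronting the kernel degeneracies, the paper sidesteps them by proving instability inside the flow-invariant radial/spinorial class and appealing to the simplicity of $\pm\Lambda$ for $\eub{j}\eub{l}_{rad}$ there.

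Two smaller points. First, your claim that the ``other block'' of $\Tau_0$ coming from the mass term is ``a bounded invertible operator with a spectral gap of order one'' needs care: after the rescaling the relativistic block degenerates (its spectral scale is $O(1/\epsilon^2)$ relative to the NLS block), and the paper handles this not by isolating an invertible block but by absorbing it into the $4\times 4$ matrix operator $\eub{A}_\Lambda$ of \eqref{def-akk}, whose kernel is shown in Lemma~\ref{lemma-ker-a} to match $\ker(\eub{j}\eub{l}_{rad}-\Lambda)$. Second, the nondegeneracy needed for the reduction is not automatic: the paper must and does prove $\langle\Phi_\Lambda^*,\eub{K}_1\Phi_\Lambda\rangle\ne 0$ (Lemma~\ref{lemma-nondegen}) via $L_-\ge 0$, and an analogous nondegeneracy would have to be checked in your Feshbach reduction as well.
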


See Figure~\ref{fig-spectrum}.

\begin{remark}\label{remark-nonlinear}
The existence of an eigenvalue with
positive real part
in the linearization at a particular solitary wave
generally implies the
dynamic, or \emph{nonlinear}, instability
of this wave.
We expect that this could be proved
following the argument of \cite{MR2916078}
given in the context of the nonlinear Schr\"odinger equation.
\end{remark}

\bigskip

\begin{figure}[ht]
\begin{center}
\setlength{\unitlength}{1pt}
\begin{picture}(0,180)(0,-90)
\font\gnuplot=cmr10 at 10pt
\gnuplot

\put(6,58){$i(m+{\omega})$}
\put(5,19){$i(m-{\omega})$}

\put( 23,-11){$\lambda\sb\omega$}
\put(-36,-11){$-\lambda\sb\omega$}

\put(-20,0){\circle*{4}}
\put( 20,0){\circle*{4}}
\put(  0,0){\circle*{4}}

\put(-60,  0){\vector(1,0){120}}
\put(  0,-80){\vector(0,1){160}}

\linethickness{2pt}
\put( 1,-80){\line(0,1){60}}
\put(-1,-80){\line(0,1){20}}
\put( 1,80){\line(0,-1){20}}
\put(-1,80){\line(0,-1){60}}

\end{picture}
\end{center}
\caption{
\small
Main result:
The point spectrum of the linearization
of the nonlinear Dirac equation
in $\R^n$, $n\le 3$,
with
$f(s)=s^k$, $k>\frac{2}{n}$,
at a solitary wave
with $\omega\lesssim m$
contains two nonzero real eigenvalues,
$\pm\lambda\sb\omega$,
with $\lambda\sb\omega=O(m-\omega)$.
See Theorem~\ref{main-theorem-dirac-vbk}.
Also plotted on this picture
is the essential spectrum,
with the edges
at $\lambda=\pm i(m-{\omega})$
and with the embedded threshold points
(branch points of the dispersion relation)
at $\lambda=\pm i(m+{\omega})$.
}

\label{fig-spectrum}

\end{figure}
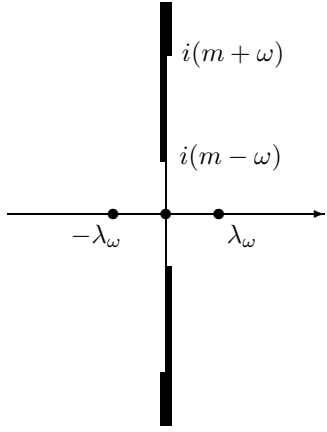

\begin{remark}
Theorem~\ref{main-theorem-dirac-vbk}
extends easily to nonlinearities
$f\in C\sp 2(\R)$
of the form
\[
f(s)=a s^k+O(s^{k+1}),
\qquad
a>0.
\]
\end{remark}

\begin{remark}\label{remark-enough}
In Theorem~\ref{main-theorem-dirac-vbk},
the value $\omega\sb 1<m$
could be taken to be the smallest point such that
there is a $C^1$ family of solitary waves
$\omega\mapsto \phi\sb\omega$
for $\omega\in(\omega\sb 1,m)$
and moreover
$\p\sb\omega Q(\omega)$ does not vanish on $(\omega\sb 1,m)$.
Indeed, by \cite{dirac-vk},
the positive and negative eigenvalues
remain trapped on the real axis,
not being able
to collide at $\lambda=0$
for $\omega\in (\omega\sb 1,m)$
as long as $\p\sb\omega Q(\omega)$
does not vanish on this interval.
These eigenvalues cannot leave into the complex
plane, either, since they are simple,
while the spectrum of the operator
is symmetric with respect to the real
and imaginary axes.
\end{remark}

\begin{remark}
If the family of solitary waves
$\omega\mapsto\phi\sb\omega$
is defined for $\omega\in(\omega\sb 0,m)$
with $\omega\sb 0<\omega\sb 1$
and $\p\sb\omega Q(\omega)$ vanishes at $\omega\sb 1$,
we do not know what happens for $\omega\lesssim\omega\sb 1$.
If $\p\sb\omega Q(\omega)$ changes
the sign at $\omega\sb 1$,
then, generically, either the pair of real
eigenvalues, having collided at $\lambda=0$
when $\omega=\omega\sb 1$,
turn into a pair of purely imaginary eigenvalues
(linear instability disappears),
or instead
two purely imaginary eigenvalues,
having met at $\lambda=0$,
turn into the
second  pair of real eigenvalues
(linear instability persists).
If $\p\sb\omega Q(\omega)$ vanishes at $\omega\sb 1$
but does not change the sign,
then generically the eigenvalues touch
and separate again, remaining on $\R\sb{+}$ and $\R\sb{-}$.
More details are in \cite{dirac-vk}.
\end{remark}

\begin{remark}
Theorem~\ref{main-theorem-dirac-vbk}
is in formal agreement with the Vakhitov--Kolokolov
stability criterion \cite{VaKo},
since
for $\omega\lesssim m$
one has
$Q'(\omega)>0$ for $k>\frac{2}{n}$.
Let us mention that
the sign of the stability criterion,
$Q'(\omega)<0$,
differs from \cite{VaKo}
because of their writing the solitary waves
in the form
$\varphi(x) e^{+i\omega t}$.
\end{remark}

\begin{remark}
It has been shown that in the 1D case with $k=1$,
the small amplitude
solitary waves are spectrally stable
\cite{dirac-spectrum}.
\end{remark}

\begin{remark}
We expect that in the 1D case with $k=2$
(``quintic nonlinearity'')
the small solitary wave solutions
of the nonlinear Dirac equation in 1D
are spectrally stable.
For the corresponding nonlinear Schr\"odinger
equation (quintic nonlinearity in 1D),
the charge is constant,
thus the zero eigenvalue of a linearized operator
is always of higher algebraic multiplicity.
For the Dirac equation, this
degeneracy is ``resolved'':
using the expression for the charge $Q(\omega)$
from \cite[Section 2A]{PhysRevE.82.036604},
one can see that the charge is now a decaying function
for $\omega\lesssim m$
(with nonzero limit as $\omega\to m$),
suggesting that
there are two purely imaginary eigenvalues
$\pm\lambda\sb\omega$
in the spectrum of $A\sb\omega$,
with $\lambda\sb\omega=o(m-\omega)$,
but no eigenvalues with nonzero real part.
\end{remark}

\begin{remark}\label{remark-3d}
Let us notice that
in the 3D case
for the cubic nonlinearity $f(s)=s$
(this is the original Soler model
from \cite{PhysRevD.1.2766}),
based on the numerical evidence from
\cite{PhysRevD.1.2766,PhysRevLett.50.1230},
the charge
$Q(\omega)$
has a local minimum at $\omega\sb 1\approx 0.936 m$,
suggesting that
the solitary waves
with $\omega\sb 1<\omega<1$ are linearly unstable,
but then
at $\omega=\omega\sb 1$ the real eigenvalues
collide at $\lambda=0$,
and there are no nonzero real eigenvalues
in the spectrum
for $\omega\lesssim \omega\sb 1$.
Incidentally, this agrees with the
``dilation-stability'' results of
\cite{MR848095}
(one studies whether the energy is minimized
or not under the charge-preserving
dilation transformations).
\end{remark}

\begin{remark}
We can not rule out the possibility
that the eigenvalues with nonzero real part
could bifurcate directly from the imaginary axis
into the complex plane. Such a mechanism is absent
for the nonlinear Schr\"odinger equation
linearized at a ground state,
for which the point eigenvalues always remain
on the real or imaginary axes.
At present, though, we do not have examples of such
bifurcations
in the context of nonlinear Dirac equation.
\end{remark}

\begin{remark}\label{remark-bl}
For $n=3$, we only consider the case $k=1$,
and we do not consider dimensions $n>3$.
This is because of the fact that
the equation
$-\Delta u+u=\abs{u}^{2k}u$ in $\R^n$
has nontrivial solutions in $H^1(\R^n)$
if and only if $0<k<2/(n-2)$,
as follows from the virial identities;
see \cite{MR0192184}
and \cite[Example 1]{MR695535}.
This is why our method does not allow us
to construct solitary wave solutions
to the nonlinear Dirac equation \eqref{nld-nd}
in $\R^n$, $n\ge 3$, 
with $k\ge 2/(n-2)$.
\end{remark}

\begin{remark}\label{remark-noninteger}
We consider here only integer powers $k$
(and only dimensions $n=1$, $2$, and $3$),
being physically the most important cases.
Mathematically, this merely avoids some minor technical complications
associated with a non-smooth nonlinearity,
and the instability argument can be extended to handle the
corresponding equation with $f(s) = |s|^k$,
in any dimension $n\ge 1$, under the condition $k > 2/n$.
(The restriction $k<2/(n-2)$ is needed
so that there are nontrivial solitary waves in NLS;
see the previous remark.)
\end{remark}

\section{Nonlinear Schr\"odinger and its
solitary waves}
\label{sect-nls}

We are going to use the fact that
the nonrelativistic limit
of the nonlinear Dirac equation
yields the nonlinear Schr\"odinger equation,
\begin{equation}\label{nls}
i\p\sb t\psi=-\frac{1}{2m} \Delta \psi
- |\psi|^{2k} \psi,
\quad
\psi(x,t)\in\C,
\quad
x\in\R^n,
\quad
k>0,
\quad n\in\N.
\end{equation}

\subsection{Solitary waves}

The properties of solitary wave solutions
\[
\psi(x,t)=\phi\sb\omega(x)e^{-i\omega t},
\qquad
\phi\sb\omega\in H^1(\R^n),
\]
with the amplitude $\phi_\omega(x)$
satisfying the stationary equation
\begin{equation}
\label{nls-soleq}
-\frac{1}{2m} \Delta \phi(x) -|\phi|^{2k} \phi
= \omega \phi, \qquad x\in\R^n,
\end{equation}
are well-known \cite{MR0454365,MR695535}.
For any $k>0$ when $n\le 2$
and
for $0<k<2/(n-2)$ when $n\ge 3$,
for each $\omega \in (-\infty,0)$, there
is a unique positive, radially symmetric solution
\[
  \phi_{\omega}(x) = \phi_{\omega}(|x|) > 0,
\]
which decays exponentially. This family
of solitary waves,
known as the {\it ground states},
is generated by rescaling a single amplitude function:
\begin{equation}
\label{nls-scaling}
  \phi_\omega(x) = \abs{\omega}^{\frac{1}{2k}}
  F (\sqrt{2m\abs{\omega}} x),
\qquad
\omega<0,
\end{equation}
where $F(x) = F(|x|) > 0$ solves
\begin{equation}\label{F-such-that}
  -\Delta F - F^{2k+1}
  = -F, \qquad x\in\R^n.
\end{equation}
In one space dimension ($n=1$),
for $k>0$,
$F(x)$ is given by the explicit formula
\[
  F(x) = \left( \frac{k+1}{\cosh^2{k x}}
  \right)^{\frac{1}{2k}}.
\]

\subsection{Linearization at a solitary wave}

To derive the linearization
of the nonlinear Schr\"odinger equation \eqref{nls}
at a solitary wave
$\psi(x,t)=\phi\sb\omega(x)e^{-i\omega t}$,
we use the Ansatz
\[
\psi(x,t)=(\phi\sb\omega(x)+\rho(x,t))e^{-i\omega t},
\qquad
\rho(x,t)\in\C,
\qquad
x\in\R^n,
\]
and arrive at the linearized equation
\begin{equation}\label{nls-lin}
\p\sb t
\bm{\uprho}
=
\eub{j}\eub{l}(\omega)
\bm{\uprho},
\qquad
\bm{\uprho}(x,t)=\begin{bmatrix}\Re\rho(x,t)\\\Im\rho(x,t)\end{bmatrix},
\end{equation}
where
\begin{equation}\label{def-jl}
\eub{j}=
\begin{bmatrix}0&1\\-1&0\end{bmatrix},
\qquad
\eub{l}=
\begin{bmatrix}\eur{l}\sb{-}&0\\0&\eur{l}\sb{+}\end{bmatrix},
\end{equation}
with
$\eur{l}\sb\pm$ self-adjoint Schr\"odinger operators
\begin{equation}\label{def-lpm}
\eur{l}\sb{-}(\omega)=-\frac{1}{2m}\Delta -|\phi\sb\omega|^{2k}-\omega,
\qquad
\eur{l}\sb{+}(\omega)=\eur{l}\sb{-}(\omega) -2k|\phi\sb\omega|^{2k}.
\end{equation}
Since the solitary wave amplitudes $\phi_\omega(x) = \phi_\omega(|x|)$ we take here are
radially symmetric, we may consider the
operators
\begin{equation}\label{def-lr}
  \eub{l}_{rad}, \,\; \eur{l}_{\pm,rad} \; := \;
  \eub{l}, \, \eur{l}_{\pm}  \mbox{ restricted to
  radially symmetric functions}.
\end{equation}

The linear stability theory of NLS ground states
is well understood, and can be summarized,
in terms of their charge \eqref{def-q-omega},
as follows:
\begin{lemma}[Vakhitov--Kolokolov stability criterion \cite{VaKo}]
\label{lemma-vk}
For the linearization
\eqref{nls-lin}
at a ground state solitary wave
$\phi\sb\omega(x)e^{-i\omega t}$,
there are real nonzero eigenvalues
$\pm\lambda\in\sigma\sb{d}(\eub{j}\eub{l})$,
$\lambda>0$,
if and only if
$\frac{d}{d\omega}Q(\omega)>0$
at this value of $\omega$.
If so, then
$\pm\lambda\in\sigma\sb{d}
(\eub{j}\eub{l}_{rad})$ are simple eigenvalues,
and moreover $\ker \eur{l}_{+,rad} = \{ 0 \}$.
\end{lemma}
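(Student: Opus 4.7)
The plan is to reduce the eigenvalue problem for $\eubj\eubl$ on the radial subspace (on which all real nonzero eigenvalues must live, since $\eurl\sb{+}$ is non-negative on the non-radial sectors) to the question of whether the quadratic form $\langle\eurl\sb{+}v,v\rangle$ has a negative direction on the codimension-one subspace $\{\phi\sb\omega\}\sp\perp$, and then convert that obstruction into a sign condition on $Q'(\omega)$ via the identity $\eurl\sb{+}\p\sb\omega\phi\sb\omega=\phi\sb\omega$.

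For the reduction, I would write the eigenvalue equation $\eubj\eubl\bm{\uprho}=\lambda\bm{\uprho}$ componentwise as $\eurl\sb{+}b=\lambda a$, $\eurl\sb{-}a=-\lambda b$, whose combination yields $\eurl\sb{-}\eurl\sb{+}b=-\lambda^2 b$. Since $\phi\sb\omega>0$ satisfies $\eurl\sb{-}\phi\sb\omega=0$, the Perron--Frobenius principle identifies $\phi\sb\omega$ as the simple ground state of $\eurl\sb{-,rad}$, so $\eurl\sb{-}\ge 0$ with $\ker\eurl\sb{-,rad}=\mathrm{span}\{\phi\sb\omega\}$. For $\lambda\neq 0$, $b$ lies in the range of $\eurl\sb{-}$ and therefore in $\{\phi\sb\omega\}\sp\perp$, and the substitution $c=\eurl\sb{-}^{-1/2}b$ converts the problem into the self-adjoint equation $\eurl\sb{-}^{1/2}\eurl\sb{+}\eurl\sb{-}^{1/2}c=-\lambda^2 c$ on $\{\phi\sb\omega\}\sp\perp$. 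A real pair $\pm\lambda\neq 0$ therefore exists iff the form $\langle\eurl\sb{+}v,v\rangle$ assumes a negative value on $\{\phi\sb\omega\}\sp\perp$.

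I would then establish that $\eurl\sb{+,rad}$ has exactly one simple negative eigenvalue and trivial kernel. The identity $\langle\eurl\sb{+}\phi\sb\omega,\phi\sb\omega\rangle=-2k\int\phi\sb\omega^{2k+2}<0$ forces at least one negative eigenvalue. On the full space $\ker\eurl\sb{+}=\mathrm{span}\{\p\sb{x\sb j}\phi\sb\omega\}$ by translation invariance, and all these generators vanish on the radial subspace, so $\ker\eurl\sb{+,rad}=\{0\}$. Simplicity and uniqueness of the radial negative eigenvalue constitute the classical non-degeneracy of the NLS ground state and rest on Kwong's uniqueness theorem for the positive radial solution of $-\Delta F+F=F^{2k+1}$ (a phase-plane argument in one space dimension).

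Differentiating the stationary equation with respect to $\omega$ yields $\eurl\sb{+}\p\sb\omega\phi\sb\omega=\phi\sb\omega$; using the trivial radial kernel, $\p\sb\omega\phi\sb\omega=\eurl\sb{+}^{-1}\phi\sb\omega$, and pairing with $\phi\sb\omega$ gives
\[
\langle\eurl\sb{+}^{-1}\phi\sb\omega,\phi\sb\omega\rangle=\tfrac{1}{2}Q'(\omega).
\]
A standard Lagrange-multiplier computation, using that the unique negative eigenvector of $\eurl\sb{+,rad}$ is sign-definite (ground state of a Schr\"odinger operator) and therefore not $L\sp 2$-orthogonal to $\phi\sb\omega>0$, shows that $\langle\eurl\sb{+}v,v\rangle$ is positive on $\{\phi\sb\omega\}\sp\perp$ iff $\langle\eurl\sb{+}^{-1}\phi\sb\omega,\phi\sb\omega\rangle<0$, that is, iff $Q'(\omega)<0$. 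Combining with the reduction identifies real nonzero eigenvalues of $\eubj\eubl\sb{rad}$ with the condition $Q'(\omega)>0$; simplicity is forced by $\eurl\sb{+,rad}$ having only a simple negative eigenvalue. The main technical obstacle is the spectral structure of $\eurl\sb{+,rad}$, which hinges on uniqueness and non-degeneracy of the NLS ground state; the rest of the argument is bookkeeping.
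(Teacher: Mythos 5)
The paper does not prove this lemma; it is cited from~\cite{VaKo} as a classical NLS result, so your task was to reconstruct the argument from scratch. Your reconstruction is the standard Vakhitov--Kolokolov argument and is correct in all essentials: you reduce to the radial sector (on which all real nonzero eigenvalues must lie, since on the non-radial spherical-harmonic sectors $\eur{l}_{+}\geq 0$ as well as $\eur{l}_{-}\geq 0$); square up to the constrained self-adjoint problem $\eur{l}_{-}^{1/2}\eur{l}_{+}\eur{l}_{-}^{1/2}c=-\lambda^2 c$ on $\{\phi_\omega\}^{\perp}$; and convert the sign of the constrained quadratic form into the sign of $Q'(\omega)$ via $\eur{l}_{+}\p_\omega\phi_\omega=\phi_\omega$ together with the standard one-negative-eigenvalue / Lagrange-multiplier lemma. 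The essential spectral inputs --- $\ker\eur{l}_{-,rad}=\mathop{\rm span}\{\phi_\omega\}$ by Perron--Frobenius, $\ker\eur{l}_{+,rad}=\{0\}$ and the single simple negative eigenvalue of $\eur{l}_{+,rad}$ by non-degeneracy of the NLS ground state (Kwong, Coffman, Weinstein) --- are all correctly invoked. Two minor points are worth making explicit: (i) the passage from ``the constrained form takes a negative value'' to ``$\eur{l}_{-}^{1/2}\eur{l}_{+}\eur{l}_{-}^{1/2}$ has a negative \emph{eigenvalue}'' requires the remark that $\inf\sigma_{\rm ess}(\eur{l}_{\pm})=-\omega>0$, so any negative spectrum of the conjugated operator is necessarily discrete; (ii) your argument actually shows $\ker\eur{l}_{+,rad}=\{0\}$ unconditionally for the power nonlinearity in the admissible range $0<k<2/(n-2)$, which is slightly stronger than the conditional phrasing of the lemma but is exactly what the paper needs in Lemma~\ref{lemma-nls-k} and Lemma~\ref{lemma-xx-ff-3}.
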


Using \eqref{nls-scaling},
we compute:
\begin{equation}\label{qp}
Q(\omega)
=\int\sb{\R^n} |\phi\sb\omega(x)|^2\,dx
=\abs{\omega}^{\frac 1 k} \int\sb{\R^n}
F^2(\sqrt{2m\abs{\omega}} x) \,dx
= C \abs{\omega}^{\frac 1 k - \frac n 2},
\quad
\omega<0,
\end{equation}
where
$C=\int\sb{\R^n} F^2(\sqrt{2m}y) \,dy>0$.
We see from \eqref{qp} that
for $\omega<0$
one has
$Q'(\omega)<0$ for $k < 2/n$,
$Q'(\omega)=0$ for $k = 2/n$, and
$Q'(\omega)>0$ for $k > 2/n$.
Thus:

\begin{lemma}\label{lemma-nls-k}
Let $n\in\N$. If $k > n/2$ 
(and $k < 2/(n-2)$ if $n \geq 3$), then
$\sigma\sb{p}(\eub{j}\eub{l}_{rad})\ni\{\pm\lambda\}$,
for some $\lambda>0$,
and in particular the NLS ground states
are linearly unstable.
\end{lemma}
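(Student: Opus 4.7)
The plan is to read off the claim as an immediate corollary of the explicit scaling formula \eqref{qp} combined with the Vakhitov--Kolokolov criterion (Lemma~\ref{lemma-vk}). First I would simply differentiate the expression $Q(\omega)=C\abs{\omega}^{1/k-n/2}$ in $\omega$, keeping in mind that $\omega<0$ so $d\abs{\omega}/d\omega=-1$. This gives
\[
Q'(\omega)=-C\Bigl(\tfrac{1}{k}-\tfrac{n}{2}\Bigr)\abs{\omega}^{1/k-n/2-1}.
\]
Since $C>0$ and the power of $\abs{\omega}$ is positive, the sign of $Q'(\omega)$ is opposite to the sign of $\tfrac{1}{k}-\tfrac{n}{2}$; in particular $Q'(\omega)>0$ for every $\omega<0$ whenever $k>2/n$.

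With strict positivity of $Q'(\omega)$ in hand, I would invoke Lemma~\ref{lemma-vk}: the \emph{only if} direction of the Vakhitov--Kolokolov criterion then immediately produces a simple pair of real eigenvalues $\pm\lambda\in\sigma_p(\eub{j}\eub{l}_{rad})$, $\lambda>0$, which is the stated conclusion and, by definition, amounts to linear instability of the ground state.

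There is no real obstacle here; the content of the lemma is entirely contained in the scaling \eqref{nls-scaling} and in the Vakhitov--Kolokolov characterization that has already been stated. The minor points to keep track of are (i) the sign convention in differentiating $\abs{\omega}$ (which explains why $k>2/n$ rather than $k<2/n$ yields instability in this parametrization of solitary waves by $\omega<0$), and (ii) the auxiliary hypothesis $k<2/(n-2)$ for $n\ge 3$, which is needed purely to guarantee the existence of the profile $F$ solving \eqref{F-such-that}, in accordance with Remark~\ref{remark-bl}.
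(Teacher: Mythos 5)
Your proposal is correct and matches the paper's own argument exactly: the paper derives Lemma~\ref{lemma-nls-k} by differentiating the scaling formula \eqref{qp} to conclude $Q'(\omega)>0$ precisely when $k>2/n$, and then invokes the Vakhitov--Kolokolov criterion (Lemma~\ref{lemma-vk}) to obtain the pair of simple real eigenvalues. You have also implicitly corrected what appears to be a typo in the lemma's stated hypothesis (it should read $k>2/n$, consistent with the computation immediately preceding it and with Theorem~\ref{main-theorem-dirac-vbk}, rather than $k>n/2$).
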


\section{Nonlinear Dirac and its solitary waves}
\label{sect-linearization}

Solitary waves are solutions
to \eqref{nld-nd} of the form
\[
\psi(x,t)=\phi\sb\omega(x)e^{-i\omega t}\sothat
\phi\sb\omega\in H^1(\R,\C^{N}),\ \omega\in\R
\]
and as such the amplitude $\phi_\omega(x)$
must satisfy
\begin{equation}
\label{nld-sol}
  \omega \phi_\omega =-i\bm\alpha\cdot\bm\nabla\psi
+m\beta\psi
-f(\psi\sp\ast\beta\psi)\beta\psi, \qquad x\in\R^n.
\end{equation}

\subsection{Solitary waves in one dimension}
\label{sect-solitary-waves-1d}

We first give a simple demonstration
of the existence and uniqueness of solitary waves in one dimension,
following the article \cite{MR2892774},
and allowing for more general nonlinearities $f(s)$.

\begin{lemma}
\label{lemma-existence-nld-1d}
Let $f(0)=0$.
Denote $g(s)=m-f(s)$,
and let $G(s)$ be the antiderivative of $g(s)$ such that $G(0)=0$.
Assume that
there is $\omega\sb 0<m$
such that
for given $\omega\in(\omega\sb 0,m)$
there exists
$\varGamma\sb\omega>0$ such that
\begin{equation}\label{def-Xi}
\omega\varGamma\sb\omega=G(\varGamma\sb\omega),
\quad
\omega\ne g(\varGamma\sb\omega),
\quad\mbox{and}\quad
\omega s<G(s)\quad{\rm for}
\ s\in(0,\varGamma\sb\omega).
\end{equation}
Then there is a solitary wave solution
$\psi(x,t)=\phi\sb\omega(x)e^{-i\omega t}$
to \eqref{nld-nd},
where
\begin{equation}\label{psi-v-u}
\phi\sb\omega(x)
=\begin{bmatrix}v(x,\omega)\\u(x,\omega)\end{bmatrix},
\end{equation}
with both $v$ and $u$ real-valued,
belonging to $H^1(\R)$ as functions of $x$,
$v$ being even and $u$ odd.

More precisely,
for $x\in\R$
and $\omega\in(\omega\sb 0,m)$,
let us define $\mathscr{X}(x,\omega)$
and $\mathscr{Y}(x,\omega)$ by
\begin{equation}\label{def-xi-eta}
\mathscr{X}=v^2-u^2,\qquad \mathscr{Y}=v u.
\end{equation}
Then $\mathscr{X}(x,\omega)$ is the unique positive symmetric solution to
\begin{equation}
\p\sb x^2\mathscr{X}
=-\p\sb\mathscr{X}(-2G(\mathscr{X})^2+2\omega^2\mathscr{X}^2),
\qquad
\lim\sb{x\to\pm\infty}\mathscr{X}(x,\omega)=0,
\end{equation}
and $\mathscr{Y}(x,\omega)=-\frac{1}{4\omega}\p\sb x\mathscr{X}(x,\omega)$.
This solution satisfies
$\mathscr{X}(0,\omega)=\varGamma\sb\omega$.
\end{lemma}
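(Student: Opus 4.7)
The plan is to reduce the Dirac ODE for $\phi_\omega=(v,u)^\top$ to a scalar second-order ODE for the invariant $\mathscr{X}=v^2-u^2$, solve that ODE by a phase-plane argument, and then reconstruct $(v,u)$. First I would plug the ansatz \eqref{psi-v-u} (with real $v,u$) and the explicit matrices \eqref{dm-1}, \eqref{dm-beta} into \eqref{nld-sol}. Using $\phi^\ast\beta\phi=v^2-u^2=\mathscr{X}$ and $g(\mathscr{X})=m-f(\mathscr{X})$, the equation becomes the first-order real system
\begin{equation*}
\p_x u=(\omega-g(\mathscr{X}))\,v,\qquad \p_x v=-(\omega+g(\mathscr{X}))\,u.
\end{equation*}
The parity ansatz ($v$ even, $u$ odd) is consistent with this system and forces $\mathscr{X}$ even and $\mathscr{Y}=vu$ odd.

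Next I would extract two identities. Differentiating $\mathscr{X}$ and $v^2+u^2$ using the system gives
\begin{equation*}
\p_x\mathscr{X}=-4\omega\,\mathscr{Y},\qquad \p_x(v^2+u^2)=-4g(\mathscr{X})\,\mathscr{Y},
\end{equation*}
so $g(\mathscr{X})\,\p_x\mathscr{X}=\omega\,\p_x(v^2+u^2)$; integrating and using $G(0)=0$ together with the decay $v,u\to 0$ at infinity yields the first integral $G(\mathscr{X})=\omega(v^2+u^2)$. Squaring the identity $\p_x\mathscr{X}=-4\omega\mathscr{Y}$ and using $(v^2+u^2)^2=\mathscr{X}^2+4\mathscr{Y}^2$, the first integral collapses to the energy-type relation $(\p_x\mathscr{X})^2=4G(\mathscr{X})^2-4\omega^2\mathscr{X}^2$, whose $x$-derivative is exactly the Newton equation stated in the lemma, with potential $V(\mathscr{X})=-2G(\mathscr{X})^2+2\omega^2\mathscr{X}^2$.

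Now I would carry out a standard phase-plane analysis in the $(\mathscr{X},\p_x\mathscr{X})$-plane. Using $f(0)=0$ one gets $g(0)=m$ and $V''(0)=4(\omega^2-m^2)<0$ since $|\omega|<m$, so the origin is a hyperbolic saddle. The hypothesis \eqref{def-Xi} guarantees $V(\varGamma_\omega)=0$, $V(\mathscr{X})<0$ on $(0,\varGamma_\omega)$, and $V'(\varGamma_\omega)=4\omega\varGamma_\omega(\omega-g(\varGamma_\omega))\neq 0$, so $\varGamma_\omega$ is a simple turning point. These data pin down a unique homoclinic loop to the origin passing through $(\varGamma_\omega,0)$, giving a unique symmetric positive $\mathscr{X}(x,\omega)\in H^1(\R)$ with $\mathscr{X}(0,\omega)=\varGamma_\omega$ and exponential decay at $\pm\infty$; moreover $\mathscr{Y}=-\p_x\mathscr{X}/(4\omega)$ is odd.

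Finally I would reconstruct $(v,u)$ from $v^2+u^2=G(\mathscr{X})/\omega$ and $v^2-u^2=\mathscr{X}$; these give nonnegative right-hand sides on the solution locus by $\omega s<G(s)$ on $(0,\varGamma_\omega)$, and the sign choices $v>0$, $u=\mathscr{Y}/v$ produce smooth functions with the correct parities and decay. A direct check that the pair $(v,u)$ so defined satisfies the first-order system recovers the Dirac equation \eqref{nld-nd}. The only delicate point I anticipate is the regularity of $u$ at $x=0$, where $v(0)^2=\varGamma_\omega$ and $u(0)=0$: here one has to verify that $u(x)=\mathscr{Y}(x,\omega)/v(x,\omega)$ extends smoothly across the zero of $\mathscr{Y}$, which follows from $v(0)>0$ together with smoothness of $\mathscr{X}$ and $\mathscr{Y}$; this will be the main technical step of the proof.
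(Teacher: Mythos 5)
Your proposal is correct and follows essentially the same route as the paper: reduce the first-order real system for $(v,u)$ to the Newton-type scalar ODE for $\mathscr{X}=v^2-u^2$ via the first integral $G(\mathscr{X})=\omega(v^2+u^2)$, run a phase-plane argument using \eqref{def-Xi} as the turning-point condition, and reconstruct $(v,u)$. The minor variations (you obtain the first integral by direct integration of $g(\mathscr{X})\p_x\mathscr{X}=\omega\p_x(v^2+u^2)$ rather than by noting that the Hamiltonian $h(v,u)$ in the paper vanishes, and you derive the second-order equation by squaring $\p_x\mathscr{X}=-4\omega\mathscr{Y}$ rather than by computing $\p_x\mathscr{Y}$ directly) are cosmetic, and your explicit treatment of the reconstruction of $(v,u)$ — including the nonnegativity of $u^2=\bigl(G(\mathscr{X})/\omega-\mathscr{X}\bigr)/2$ and the smoothness of $u=\mathscr{Y}/v$ at $x=0$ — actually fills in a step the paper leaves implicit.
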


\begin{proof}
Substituting
$\phi\sb\omega(x)e^{-i\omega t}$,
with $\phi\sb\omega$ from \eqref{psi-v-u},
into (\ref{nld-sol}), we obtain:
\begin{equation}\label{omega-v-u}
\left\{
\begin{array}{ll}
\omega v=\p\sb x u+g(\abs{v}^2-\abs{u}^2)v,
\\
\omega u=-\p\sb x v-g(\abs{v}^2-\abs{u}^2)u.
\end{array}
\right.
\end{equation}
Since we assume that both $v$ and $u$ are real-valued,
we may rewrite (\ref{omega-v-u}) as the following
Hamiltonian system:
\begin{equation}\label{stat-eqn}
\left\{
\begin{array}{ll}
\p\sb x u=\omega v-g(v^2-u^2)v=\p\sb{v} h(v,u),
\\
-\p\sb x v=\omega u+g(v^2-u^2)u=\p\sb{u} h(v,u),
\end{array}
\right.
\end{equation}
where the Hamiltonian $h(v,u)$ is given by
\begin{equation}\label{def-h}
h(v,u)=\frac{\omega}{2}(v^2+u^2)-\frac{1}{2}G(v^2-u^2).
\end{equation}
The solitary wave
with a particular $\omega\in(\omega\sb 0,m)$
corresponds to a trajectory of this Hamiltonian
system such that
\[
\lim\sb{x\to\pm\infty}v(x,\omega)
=\lim\sb{x\to\pm\infty}u(x,\omega)=0,
\]
hence
$\lim\sb{x\to\pm\infty}\mathscr{X}=0$.
Since $G(s)$ satisfies
$G(0)=0$,
we conclude that
$
h(v(x),u(x))\equiv 0,
$
which leads to
\begin{equation}\label{phi-phi-g}
\omega(v^2+u^2)=G(v^2-u^2).
\end{equation}
We conclude from
\eqref{phi-phi-g}
that
solitary waves may only correspond to $\abs{\omega}<m$,
$\omega\ne 0$.


The functions $\mathscr{X}(x,\omega)$ and $\mathscr{Y}(x,\omega)$
introduced in
\eqref{def-xi-eta} are to solve
\begin{equation}\label{xi-eta-system}
\left\{
\begin{array}{ll}
\p\sb x\mathscr{X}=-4\omega\mathscr{Y},
\\
\p\sb x\mathscr{Y}=-(v^2+u^2)g(\mathscr{X})+\omega\mathscr{X}
=-\frac{1}{\omega}G(\mathscr{X})g(\mathscr{X})+\omega\mathscr{X},
\end{array}
\right.
\end{equation}
and to have the asymptotic behavior
$\lim\sb{\abs{x}\to\infty}\mathscr{X}(x)=0$,
$\lim\sb{\abs{x}\to\infty}\mathscr{Y}(x)=0$.
In the second equation in
(\ref{xi-eta-system}), we used the relation (\ref{phi-phi-g}).
The system (\ref{xi-eta-system}) can be
written as the following equation on $\mathscr{X}$:
\begin{equation}\label{xi-p-p}
\p\sb x^2\mathscr{X}
=-\p\sb\mathscr{X}(-2 G(\mathscr{X})^2+2\omega^2\mathscr{X}^2)
=4
\big(G(\mathscr{X})g(\mathscr{X})-\omega^2\mathscr{X}\big).
\end{equation}
This equation describes a particle in the potential
$-2 G(s)^2+2\omega^2 s^2$.
The condition \eqref{def-Xi}
is needed so that $s=\varGamma\sb\omega$
is the turning point
for the zero energy trajectory in this potential.
The existence of a positive solution
$\mathscr{X}(x,\omega)$ follows.
This solution is unique up to a translation,
and it will be made symmetric in $x$
by requiring $\mathscr{X}(0,\omega)=\varGamma\sb\omega$.
\end{proof}

\subsection{Solitary waves in the nonrelativistic limit}
\label{sect-sw-hd}

In dimensions $n=1$, $2$ and $3$, we consider solitary wave amplitudes $\phi_\omega(x)$ of the
forms given in~\eqref{sol-forms-1} and \eqref{sol-forms-23}.
Substituting these into the nonlinear
Dirac equation~\eqref{nld-nd}, a
straightforward calculation results in
the system
\begin{equation}
\label{soleq-nd}
\left\{ \begin{array}{l}
\omega v = \p_r u + \frac{n-1}{r} u +
m v-f(v^2-u^2)v \\
\omega u = -\p_r v - m u+f(v^2-u^2)u
\end{array} \right.
\end{equation}
for the pair of real-valued functions $v = v(r,\omega)$,
$u = u(r,\omega)$. Notice that
equation~\eqref{soleq-nd} includes the
$1$-dimensional case~\eqref{omega-v-u}
if we interpret $r=x$.

Recalling that $f(s)=s^{k}$, we arrive at
\[
(\omega-m) v
= \p_r u + \frac{n-1}{r} u - f v,
\qquad
(\omega+m) u
= - \p_r v + f u,
\]
with
\[
  f:=(v^2-u^2)^{k}.
\]

To consider the nonrelativistic limit, we set
\[
  m^2 - \omega^2 = \epsilon^2, \quad
  0 < \epsilon \ll m,
\]
and rescale $v(r,\omega)$ and $u(r,\omega)$ as follows:
\[
  v(r,\omega) =\epsilon^{\frac 1 k}
  V(\epsilon r,\epsilon), \qquad
  u(r,\omega) =\epsilon^{1+\frac 1 k}
  U(\epsilon r,\epsilon).
\]
Then $V$, $U$ should satisfy
\[
\left\{ \begin{array}{l}
(\omega-m)\epsilon^{\frac 1 k} V
=\epsilon^{2+\frac 1 k} (\p\sb RU + \frac{n-1}{R} U)
- \epsilon^{1/k} f V \\
(\omega+m)\epsilon^{1+\frac 1 k} U
=-\epsilon^{1+\frac 1 k} \p\sb R V
+ \epsilon^{1 + \frac 1 k} f U
\end{array} \right.,
\]
where $R = \epsilon r$ denotes the ``rescaled variable".
Using $\omega = m - \frac{1}{2m} \epsilon^2
+ O(\epsilon^4)$,
and taking into account that
\[
f = (\epsilon^{\frac{2}{k}} V^2
- \epsilon^{2+\frac{2}{k}} U^2 )^{k}
= \epsilon^2 V^{2k} + \epsilon^4
O(U^{2k} + V^{2k}),
\]
we re-write the system as
\begin{equation}\label{zero-is-phi1}
\left\{ \begin{array}{l}
(-\frac{1}{2m} + O(\epsilon^2)) V =
\p\sb R U + \frac{n-1}{R} U - V^{2k+1}
+ \epsilon^2 O((U^{2k} + V^{2k})|V|) \\
(2m + O(\epsilon^2)) U = -\p\sb R V
+ \epsilon^2 V^{2k} U +
\epsilon^4 O((U^{2k} + V^{2k})|U|)
\end{array} \right..
\end{equation}
The rescaled system~\eqref{zero-is-phi1}
has an obvious limit as $\epsilon\to 0$.
Formally (for now)
setting
\[
\hat V(r)=\lim\sb{\epsilon\to 0}V(r,\epsilon),
\qquad
\hat U(r)=\lim\sb{\epsilon\to 0}U(r,\epsilon),
\]
we arrive at
\begin{equation}\label{def-hat-phi}
-\frac{1}{2m} \hat V = \p\sb R \hat U + \frac{n-1}{R}
\hat U - \hat V^{2k+1}, \qquad
2m \hat U = -\p\sb R\hat V.
\end{equation}
Substituting the second equation into the first one
yields
\[
  -\frac{1}{2m} (\p\sb R^2+\frac{n-1}{R}\p\sb R)\hat V - \hat V^{2k+1}
  = -\frac{1}{2m} \hat V, \qquad
  \hat{U} = - \frac{1}{2m} \p\sb R \hat V.
\]
This equation for $\hat V(r)$ is precisely
the equation~\eqref{nls-soleq} for
NLS solitary wave amplitudes $\phi_\omega$
with $\omega = -\frac{1}{2m}$.
Thus we let
$\hat V(r)$ be the (unique) NLS ground state:
\begin{equation}
\label{Vhatdef}
  \hat V(r) := (2m)^{-\frac{1}{2k}}
  F(r), \qquad
  \hat U(r) :=
-(2m)^{-\frac{1}{2k}-1}
  F'(r),
\end{equation}
with $F(r)$ the unique positive
spherically symmetric solution to \eqref{F-such-that}.
We can use this nonrelativistic limit to
construct nonlinear Dirac solitary waves for
$\epsilon^2 = m^2 - \omega^2 \ll m^2$:
\begin{lemma}\label{lemma-xx-ff-3}
There is $\omega\sb 0 < m$
such that for $\omega = \sqrt{m^2-\epsilon^2} \in (\omega\sb 0,m)$,
there are solutions of~\eqref{soleq-nd} of the form
\[
v(r,\omega) = \epsilon^{\frac 1 k} \left[
\hat V(\epsilon r) + \tilde V(\epsilon r) \right],
\quad
u(r,\omega)=\epsilon^{1+\frac 1 k} \left[
\hat U (\epsilon r) + \tilde U(\epsilon r) \right],
\]
\[
\| \tilde V \|_{H^2} + \| \tilde U \|_{H^2}
= O(\epsilon^2).
\]
\end{lemma}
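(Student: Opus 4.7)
The plan is to apply an implicit function theorem argument to the rescaled system~\eqref{zero-is-phi1}, viewed as a smooth perturbation of its $\epsilon = 0$ limit~\eqref{def-hat-phi}, working in $H^2_{rad}(\R^n) \times H^2_{rad}(\R^n)$. The second equation in~\eqref{zero-is-phi1} has leading operator $(2m + O(\epsilon^2)) U$, which is an invertible multiplication for $\epsilon$ small, so I would first use the standard implicit function theorem to solve it uniquely for $U$ in terms of $V$, producing a smooth solution map $U = \mathcal{G}(V, \epsilon)$ defined near $(\hat V, 0)$ and satisfying $\mathcal{G}(V, 0) = -\frac{1}{2m}\p_R V$. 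Substituting this back into the first equation reduces the whole problem to a single scalar equation $\mathcal{F}(V, \epsilon) = 0$ in $H^2_{rad}$, and at $\epsilon = 0$ this is exactly the NLS ground-state equation~\eqref{nls-soleq} at $\omega = -\frac{1}{2m}$ satisfied by $\hat V$.

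Writing $V = \hat V + \tilde V$, the Fr\'echet derivative $D_V \mathcal{F}(\hat V, 0)$ is precisely the Schr\"odinger operator $\eur{l}_{+,rad}$ defined in~\eqref{def-lpm}--\eqref{def-lr} (at $\omega = -\frac{1}{2m}$). By Lemma~\ref{lemma-vk}, $\ker \eur{l}_{+,rad} = \{0\}$ (for $k > 2/n$; the fact extends to the full existence range by the classical result that $\ker \eur{l}_+$ on $L^2(\R^n)$ is spanned by the non-radial translation modes $\p_{x_j}\hat V$), so that $\eur{l}_{+,rad} : H^2_{rad} \to L^2_{rad}$ is an isomorphism. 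Consequently I may recast $\mathcal{F}(V, \epsilon) = 0$ as the fixed-point problem
\[
\tilde V \;=\; -\eur{l}_{+,rad}^{-1} \mathcal{N}(\tilde V, \epsilon),
\]
where $\mathcal{N}$ gathers the nonlinear and $\epsilon$-dependent remainders. The source $\mathcal{N}(0, \epsilon)$ is $O(\epsilon^2)$ in $L^2_{rad}$ because the Taylor expansions of $\omega = \sqrt{m^2 - \epsilon^2} = m - \epsilon^2/(2m) + O(\epsilon^4)$ and of the rescaled nonlinearity $f = (V^2 - \epsilon^2 U^2)^k = V^{2k} + O(\epsilon^2)$ both contribute errors of that size; meanwhile $\mathcal{N}$ is Lipschitz with small constant on a ball of radius $O(\epsilon^2)$ about $0$ in $H^2_{rad}$. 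The Banach fixed-point theorem then produces a unique $\tilde V$ with $\|\tilde V\|_{H^2} = O(\epsilon^2)$, and $\tilde U := \mathcal{G}(\hat V + \tilde V, \epsilon) - \hat U$ inherits the same bound.

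The main technical obstacle is estimating the nonlinear map $\mathcal{N}$ in $H^2_{rad}$: one must control the polynomial nonlinearity $V^{2k+1}$ together with coupling terms such as $\epsilon^2 V^{2k} U$ via the Sobolev embedding $H^2(\R^n) \hookrightarrow L^\infty(\R^n)$, which holds precisely for $n \leq 3$ (matching the dimensional restriction of the lemma), combined with the smoothness and exponential decay of the NLS ground state $\hat V$. The weight $\frac{n-1}{R}$ in the radial Laplacian is benign on $H^2_{rad}$, and the Taylor remainders in $\omega$ and in $f$ are routine to bound uniformly in $\epsilon$.
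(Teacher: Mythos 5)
Your argument is correct and rests on the same two pillars as the paper's proof: the triviality of $\ker\eur{l}_{+,rad}$ and a contraction-mapping argument around the nonrelativistic limit. The only structural difference is where the elimination of $U$ happens. The paper runs the contraction on the coupled pair $(\tilde V,\tilde U)$ by inverting the $2\times 2$ operator $\eub{H}$ of \eqref{def-h-large}, reducing $\ker\eub{H}=\{0\}$ to $\ker\eur{l}_{+,rad}=\{0\}$ after the fact; you instead eliminate $U$ at the nonlinear level first (legitimate, since the leading operator in the second equation is multiplication by $2m+O(\epsilon^2)$), and then contract on a single scalar equation whose linearization at $(\hat V,0)$ is exactly $\eur{l}_{+,rad}$. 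This is essentially the same proof with the scalar reduction made explicit for the full nonlinear problem rather than just for the kernel computation, which is arguably cleaner, and you are also more explicit than the paper about why $\ker\eur{l}_{+,rad}=\{0\}$ holds for all $k$ in the existence range (the kernel of $\eur{l}_+$ is spanned by the non-radial translation modes $\p_{x_j}\hat V$). One small point you should fill in: since $\mathcal{G}(V,0)=-\frac{1}{2m}\p_R V$ loses a derivative, the claimed bound $\|\tilde U\|_{H^2}=O(\epsilon^2)$ does not follow directly once you have $\tilde V\in H^2$ with $\|\tilde V\|_{H^2}=O(\epsilon^2)$. You need to bootstrap $\tilde V$ one rung up in regularity via the fixed-point identity $\tilde V=-\eur{l}_{+,rad}^{-1}\mathcal{N}(\tilde V,\epsilon)$, using that $\eur{l}_{+,rad}^{-1}$ gains two derivatives and that $H^2(\R^n)$ is an algebra for $n\le 3$, before you can read off the stated $H^2$ bound on $\tilde U$.
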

\begin{remark}
In the one-dimensional case, since the
solitary waves are unique (up to symmetries),
it follows that these asymptotics describe {\it every} solitary wave for $\omega$ close to $m$,
or equivalently every small amplitude solitary wave.
\end{remark}
\begin{proof}
The argument parallels that of~\cite{2008arXiv0812.2273G}, where
the (more general) nonlinearity
$f(s) = \abs{s}^{\theta}$, $0 < \theta < 2$
is considered for $n=3$. Writing
\[
V(R,\epsilon) = \hat{V}(R) + \tilde{V}(R,\epsilon),
\qquad
U(R,\epsilon) = \hat{U}(R) + \tilde{U}(R,\epsilon),
\]
and subtracting equations~\eqref{zero-is-phi1}
and \eqref{def-hat-phi}, we arrive at
\[
\begin{split}
&-\frac{1}{2m}\tilde V + O(\epsilon^2) V
=
(\p_R + \frac{n-1}{R}) \tilde{U} -
(2k+1) \hat V^{2k} \tilde{V}
\\
&\qquad\qquad
+ O(|\hat V|^{2k+1}
\tilde V^2 + |\tilde V|^{2k+1})
+ \epsilon^2 O((U^{2k}+V^{2k})|V|),
\end{split}
\]
\[
2m \tilde U + O(\epsilon^2) U =
- \p\sb R\tilde V + \epsilon^2 V^{2k} U
+ \epsilon^4 O((U^{2k}+V^{2k})|U|),
\]
which, setting
\[
  \Xi(R,\epsilon) := \left[ \begin{array}{c}
  \tilde V (R,\epsilon) \\ \tilde U (R,\epsilon)
  \end{array} \right],
\]
we may re-write as
\[
\label{HH}
\eub{H} \Xi = O_{H^1} (\epsilon^2)
+ O(\epsilon^2 |\Xi| + |\Xi|^2 + |\Xi|^{2k+1}),
\]
where
\begin{equation}\label{def-h-large}
\eub{H} :=
\left[
\begin{array}{cc}
-\frac{1}{2m} + (2k+1) \hat V^{2k}
&\ \ - (\p_R + \frac{n-1}{R}) \\
\p_R & 2m
\end{array} \right].
\end{equation}

Since
\[
\left[ \begin{array}{c} \xi \\ \eta
\end{array} \right] \in \ker \eub{H} \;\;
\iff \;\; \xi \in \ker\eur{l}_+, \;\;
\eta = -\frac{1}{2m} \p\sb R \xi,
\]
and $\ker\eur{l}_{+,rad} = \{  0 \}$
(Cf. definitions \eqref{def-lpm}, \eqref{def-lr}),
we see that $\ker \eub{H} = \{ 0 \}$.
It then follows from the fact that
$\eur{l}_{+,rad}^{-1}$ is bounded from
$L^2_r(\R^n,\C)$ to $H^2(\R^n,\C)$,
that  $\eub{H}^{-1}$ is bounded from
$H^1_r(\R^n,\C^2)$
to $H^2(\R^n,\C^2)$
(here
$L^2_r$, $H^1_r$
are the corresponding subspaces of spherically symmetric functions).
Hence
\begin{equation}
\label{Xi}
\Xi = \eub{H}^{-1} \left\{
O_{H^1} (\epsilon^2)
+ O(\epsilon^2 |\Xi| + |\Xi|^2 + |\Xi|^{2k+1})
\right\} ,
\end{equation}
and since
$\| \Xi \|_{L^\infty} \leq C \| \Xi \|_{H^2}$
(recall that $n \leq 3$), we arrive easily at
\[
\begin{split}
  \| R.H.S. \eqref{Xi} \|_{H^2} &\leq C
  \| O_{H^1} (\epsilon^2)
  + O(\epsilon^2 |\Xi| + |\Xi|^2 + |\Xi|^{2k+1} )
  \|_{H^1} \\ &\leq C
  \left\{ \epsilon^2 + \epsilon \|\Xi\|_{H^2}
  + \| \Xi \|_{H^2}^2 + \| \Xi \|_{H^2}^{2k+1}
  \right\},
\end{split}
\]
and so we see that for small enough $\epsilon$,
the map on the r.h.s. of~\eqref{Xi}
maps the ball of radius $\epsilon$ in $H^2$
into itself. A similar estimate shows that
this map is a contraction, and hence has a
unique fixed point $\Xi$ in this ball. Finally,
we see from~\eqref{Xi} that
$\| \Xi \|_{H^2} = O(\epsilon^2)$.
\end{proof}

\section{Linear instability of small amplitude solitary waves}
\label{sect-instability}

Our first observation here is that on spinor fields of the form
\[
\psi(x,t) =\begin{bmatrix} \Psi_1(x,t) \\ \Psi_2(x,t) \end{bmatrix},
\qquad
\psi(x,t) =\begin{bmatrix} \Psi_1(r,t)\\
i e^{i\phi} \Psi_2(r,t)\end{bmatrix},
\]
\[
\psi(x,t) =
\begin{bmatrix}
\Psi_1(r,t)
\begin{pmatrix}
1\\0
\end{pmatrix}
\\
i \Psi_2(r,t)
\begin{pmatrix}
\cos\theta
\\
e^{i\phi}\sin\theta
\end{pmatrix}
\end{bmatrix}
\]
in dimensions $n=1$, $n=2$, and $n=3$
respectively, the nonlinear Dirac
equation~\eqref{nld-nd} reduces to the system
\begin{equation}
\label{reduced}
\left\{ \begin{array}{l}
i \p_t \Psi_1 = (\p_r + \frac{n-1}{r}) \Psi_2
+ m \Psi_1-f(|\Psi_1|^2-|\Psi_2|^2) \Psi_1 \; , \\
i \p_t \Psi_2 = -\p_r \Psi_1 - m\Psi_2+f(|\Psi_1|^2-|\Psi_2|^2) \Psi_2
\end{array} \right.
\end{equation}
(with the convention $r=x$ for $n=1$).
The solitary waves considered in
\eqref{sol-forms-1} and \eqref{sol-forms-23}
lie in this
class of fields, corresponding to
\begin{equation}\label{sol-forms-all}
\Psi_1(r,t)=v(r,\omega)e^{-i\omega t},
\qquad
\Psi_2(r,t)=u(r,\omega)e^{-i\omega t}.
\end{equation}
To prove the instability of these solitary waves,
it suffices to show that they are unstable as
solutions of~\eqref{reduced}.

\subsection{Linearization at a solitary wave}

To derive the linearization of system~\eqref{reduced}
at a solitary wave
\eqref{sol-forms-all}
we consider solutions in the form of the Ansatz
\begin{equation}
\left[ \begin{array}{c} \Psi_1(r,t)  \\ \Psi_2(r,t)
\end{array} \right] =
\left( \left[ \begin{array}{c} v(r,\omega) \\
u(r,\omega) \end{array} \right]  +
\left[ \begin{array}{c} \rho_1(r,t) \\
\rho_2(r,t) \end{array} \right] \right)
e^{-i\omega t},
\end{equation}
where
\[
\rho(r,t) := \left[ \begin{array}{c} \rho_1(r,t) \\
\rho_2(r,t) \end{array} \right] \in \C^2.
\]
Inserting this Ansatz into system~\eqref{reduced}
with $f(s)=s^k$ and recalling that
$u(r,\omega)$ and $v(r,\omega)$ are real-valued,
we find that the linearized system for $\rho$ is
\begin{eqnarray}
\label{nld-1d-lin}
i \p_t \rho =
\begin{bmatrix}
m- \omega - (v^2-u^2)^k & \p_r + \frac{n-1}{r} \\
-\p_r & -m-\omega+ (v^2-u^2)^k
\end{bmatrix}
\rho
\nonumber
\\
\qquad
-
2k(v^2-u^2)^{k-1}
\begin{bmatrix}
v^2
&-u v
\\
-u v
&u^2
\end{bmatrix}
\Re\rho.
\end{eqnarray}
We note that the above equation is $\R$-linear
but not $\C$-linear, due to the presence of
the term with
$\Re\rho=\begin{bmatrix}\Re\rho_1\\\Re\rho_2\end{bmatrix}$.
We rewrite equation~\eqref{nld-1d-lin}
in terms of
$\Re\rho\in\R^2$
and
$\Im\rho\in\R^2$:
\begin{equation}\label{nld-1d-lin-2}
\p\sb t
\begin{bmatrix}\Re\rho\\\Im\rho\end{bmatrix}
= \eubJ \eubL(\omega)
\begin{bmatrix}\Re\rho\\\Im\rho \end{bmatrix}
\end{equation}
where $\eubJ$ corresponds to $1/i$:
\[
\eubJ=
\begin{bmatrix}
0&I\sb 2\\-I\sb 2&0
\end{bmatrix},
\]
and the $4 \times 4$ matrix operator $\eubL(\omega)$ is defined by
\[
\eubL(\omega) =
\begin{bmatrix} \eurL_{+}(\omega)  & 0 \\ 0 & \eurL_{-}(\omega) \end{bmatrix} ,
\]
where, writing
\[
  f := f(\phi_\omega^* \beta \phi_\omega)
  = (v^2(r,\omega) - u^2(r,\omega))^k, \quad
  f' := f'(\phi_\omega^* \beta \phi_\omega),
\]
we have
\begin{equation}
\label{defD-}
\eurL_{-}(\omega) =
\begin{bmatrix}
m-\omega-f& \p_r + \frac{n-1}{r} \\
-\p_r & -m-\omega+f
\end{bmatrix}
\end{equation}
and
\begin{equation}
\label{defD+}
\begin{split}
&\eurL_{+}(\omega) = \eurL_{-}(\omega)
-2 f' \begin{bmatrix}
v^2 & -u v \\ -u v & u^2
\end{bmatrix}.
\end{split}
\end{equation}
Let us remind the reader that $v = v(r,\omega)$
and $u = u(r,\omega)$
in~\eqref{defD-}-\eqref{defD+}
both depend on $\omega$.

Then equation \eqref{nld-1d-lin-2}
which describes the linearization of the
reduced system~\eqref{reduced} at the solitary wave $\phi\sb\omega e^{-i\omega t}$,
takes the form
\[
\p\sb t
\begin{bmatrix}\Re\rho\\\Im\rho\end{bmatrix}
=
\eubJ\eubL(\omega)
\begin{bmatrix}\Re\rho\\\Im\rho\end{bmatrix}
=
\begin{bmatrix}0&\eurL\sb{-}(\omega)\\-\eurL\sb{+}(\omega)&0\end{bmatrix}
\begin{bmatrix}\Re\rho\\\Im\rho\end{bmatrix}.
\]

For the sake of completeness we record
here the essential spectrum of the linearized operator:

\begin{lemma}\label{lemma-cont}
$
\sigma\sb{\!\rm ess}(\eubJ\eubL(\omega))
=
i\R\backslash i(\abs{\omega}-m,m-\abs{\omega}).
$
\end{lemma}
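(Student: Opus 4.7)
The plan is a two-step argument combining Weyl's theorem on invariance of the essential spectrum under relatively compact perturbations with an explicit Fourier computation for the asymptotic operator.

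For the first step, let $\eubJ\eubL\sb\infty$ denote the operator obtained from $\eubL(\omega)$ by setting $v(r,\omega)=u(r,\omega)=0$ throughout. The difference $\eubJ\eubL(\omega)-\eubJ\eubL\sb\infty$ is a bounded multiplication operator whose matrix entries are polynomial combinations of $v(r,\omega)$ and $u(r,\omega)$. By Lemma~\ref{lemma-xx-ff-3} together with standard ODE decay estimates applied to~\eqref{soleq-nd}, the amplitudes $v$ and $u$ decay exponentially as $r\to\infty$. Combining this exponential decay with local compactness of the Dirac-type resolvent shows that the perturbation is relatively compact with respect to $\eubJ\eubL\sb\infty$, so Weyl's theorem yields $\sigma_{\!\rm ess}(\eubJ\eubL(\omega))=\sigma_{\!\rm ess}(\eubJ\eubL\sb\infty)$, reducing the claim to computing the essential spectrum of the asymptotic operator.

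For the second step, exploit the block structure
\[
\eubJ\eubL\sb\infty=\begin{bmatrix}0 & \eurL\sb\infty \\ -\eurL\sb\infty & 0\end{bmatrix},
\qquad
\eurL\sb\infty=\begin{bmatrix}m-\omega & \p\sb r + \frac{n-1}{r} \\ -\p\sb r & -m-\omega\end{bmatrix}.
\]
The operator $\eurL\sb\infty$ is the restriction to the angular-ansatz invariant subspace of Section~\ref{sect-instability} of the shifted free Dirac operator $-i\bm\alpha\cdot\bm\nabla+m\beta-\omega I$. The identity $(-i\bm\alpha\cdot\bm\nabla+m\beta)^2=(-\Delta+m^2)I\sb N$ gives $\sigma_{\!\rm ess}(-i\bm\alpha\cdot\bm\nabla+m\beta)=\R\setminus(-m,m)$, hence $\sigma_{\!\rm ess}(\eurL\sb\infty)\subseteq(-\infty,-m-\omega]\cup[m-\omega,\infty)$. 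Computing directly on the Fourier side (after first absorbing the $\frac{n-1}{r}$ term as a relatively compact perturbation of the constant-coefficient piece), the symbol of $\eubJ\eubL\sb\infty$ at momentum $\xi\in\R$ has eigenvalues $\pm i(-\omega\pm\sqrt{m^2+\xi^2})$; as $\xi$ ranges over $\R$, these values trace out precisely $\{i\mu:|\mu|\ge m-|\omega|\}=i\R\setminus i(|\omega|-m,\,m-|\omega|)$.

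The main technical point is verifying that the angular-ansatz reduction does not shrink the essential spectrum compared with that of the full free Dirac operator. This is handled by producing explicit Weyl sequences within the invariant subspace: for each $\mu\in(-\infty,-m-\omega]\cup[m-\omega,\infty)$, take radial bumps $\chi(r/R-3/2)$ supported in $[R,2R]$ with $R\to\infty$, modulated by $e^{i\xi r}$ with $\xi$ chosen so that $-\omega\pm\sqrt{m^2+\xi^2}=\mu$, dressed by the appropriate angular prefactor from Section~\ref{sect-instability}. On the support of such a sequence the $\frac{n-1}{r}$ correction is $O(1/R)$, so it is a genuine approximate eigenvector of $\eurL\sb\infty$, furnishing the reverse inclusion and completing the identification of $\sigma_{\!\rm ess}(\eubJ\eubL(\omega))$.
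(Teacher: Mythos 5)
Your proof is correct and takes essentially the same route as the paper: reduce via Weyl's theorem (using the exponential decay of $v$ and $u$) to the free operator obtained by setting $v=u=0$, then compute its essential spectrum. The paper accomplishes the second step with a slightly slicker algebraic observation — that $\eubJ$ commutes with $\eubD_m$ and has spectrum $\{\pm i\}$, giving $\sigma_{\!\rm ess}(\eubJ(\eubD_m-\omega))=\sigma_{\!\rm ess}(i(D_m-\omega))\cup\sigma_{\!\rm ess}(-i(D_m-\omega))$ — whereas you carry out the equivalent diagonalization at the symbol level and back it up with explicit Weyl sequences, but this is a difference of presentation rather than of substance.
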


\begin{proof}
The proof follows from noticing that,
due to the exponential spatial decay of
$v(r,\omega)$, $u(r,\omega)$
and the Weyl theorem on the essential spectrum
\cite[Theorem XIII.14, Corollary 2]{MR0493421},
which leads to
\[
\sigma\sb{\!\rm ess}(\eubJ\eubL(\omega))
=\sigma\sb{\!\rm ess}(\eubJ(\eubD\sb m-\omega)),
\]
where
\[
\eubD\sb m=\begin{bmatrix}D\sb m&0\\0&D\sb m\end{bmatrix},
\qquad
D\sb m=i\sigma\sb 2\p\sb r+\sigma\sb 3 m.
\]
At the same time,
since
$D\sb m^2=-\Delta+m^2$,
$\sigma\sb{\!\rm ess}(D\sb m)=\R\backslash(-m,m)$,
while
$\eubJ$ commutes with $\eubD\sb m$ and
$\sigma(\eubJ)=\{\pm i\}$,
one concludes that
\[
\sigma\sb{\!\rm ess}(\eubJ(\eubD\sb m-\omega))
=
\sigma\sb{\!\rm ess}(i(D\sb m-\omega))
\cup
\sigma\sb{\!\rm ess}(-i(D\sb m-\omega))
=i\R\backslash i(\abs{\omega}-m,m-\abs{\omega}).
\]
\end{proof}

\subsection{Unstable eigenvalue of $\eubJ\eubL$ for $\omega\lesssim m$}
\label{sect-5.2}

\begin{proposition}\label{prop-k3}
Let $k\in\N$ satisfy $k>2/n$.
There is $\omega\sb 1<m$
(which depends on $n$ and $k$)
such that
for $\omega\in(\omega\sb 1,m)$
there are two families of eigenvalues
\[
\pm\lambda\sb\omega\in\sigma\sb p(\eubJ\eubL(\omega)),
\qquad
\mbox{with}
\quad
\lambda\sb\omega>0,
\quad\lambda\sb\omega=O(m-\omega).
\]
\end{proposition}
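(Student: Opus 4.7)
The plan is to realize the eigenvalue problem $\eubJ\eubL(\omega)\Psi=\lambda\Psi$, in the nonrelativistic limit $\omega\to m$, as a singular perturbation of the NLS eigenvalue problem $\eub{j}\eub{l}_{rad}\bm{\uprho}=\Lambda\bm{\uprho}$ at the NLS ground state $\hat V$, and then to produce the desired eigenvalue by a Rayleigh--Schr\"odinger / Lyapunov--Schmidt argument around the simple real NLS eigenvalue supplied by Lemma~\ref{lemma-nls-k}.

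Set $\epsilon=\sqrt{m^2-\omega^2}$, pass to the rescaled variable $R=\epsilon r$, and scale the four real components of the eigenfunction of $\eubJ\eubL(\omega)$ so that those corresponding to the upper spinor entries are of order $\epsilon^{1/k}$ and those corresponding to the lower spinor entries are of order $\epsilon^{1+1/k}$---exactly the scaling carried by $v$ and $u$ in Lemma~\ref{lemma-xx-ff-3}. Also write $\lambda=\epsilon^2\Lambda$, matching the expected asymptotics $\lambda_\omega=O(m-\omega)$. Writing $\Psi=(a,b)^\top$, the system $\eubJ\eubL(\omega)\Psi=\lambda\Psi$ splits into $\eurL_-(\omega)b=\lambda a$ and $\eurL_+(\omega)a=-\lambda b$, and in each of the $2\times 2$ block operators $\eurL_\pm(\omega)$ the lower-right entry equals $-m-\omega+f=-2m+O(\epsilon^2)$ and is boundedly invertible uniformly in $\epsilon$. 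Solving algebraically for the lower components in terms of the upper ones (exactly as in the derivation of~\eqref{def-hat-phi}) and substituting, then using $m-\omega=\epsilon^2/(2m)+O(\epsilon^4)$ and $f=\epsilon^2\hat V^{2k}+O(\epsilon^4)$, the reduced eigenvalue problem on the upper components---after dividing through by $\epsilon^2$---takes the form
\[
\eub{j}\eub{l}_{rad}\bm{\uprho}=\Lambda\bm{\uprho}+\epsilon^2\,\mathcal{R}(\epsilon,\Lambda)\bm{\uprho},
\]
with $\eub{l}_{rad}$ evaluated at $\omega=-\tfrac{1}{2m}$ on the NLS ground state $\hat V=(2m)^{-1/(2k)}F$, and $\mathcal{R}(\epsilon,\Lambda)$ a family of operators uniformly bounded from $H^2_r$ to $L^2_r$ on compact $(\epsilon,\Lambda)$ sets. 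The uniform estimate on $\mathcal{R}$ rests on the same $H^1_r\to H^2_r$ mapping property of $\eub{H}^{-1}$ already used in~\eqref{Xi}, now applied to the linearized resolvent.

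By Lemma~\ref{lemma-nls-k}, the hypothesis $k>2/n$ provides a simple positive real eigenvalue $\Lambda_\star$ of $\eub{j}\eub{l}_{rad}$, while the last assertion of Lemma~\ref{lemma-vk} supplies the nondegeneracy $\ker\eur{l}_{+,rad}=\{0\}$ required by the perturbation step. A standard Lyapunov--Schmidt reduction---decomposing along the one-dimensional $\Lambda_\star$-eigenspace and its complement, inverting $\eub{j}\eub{l}_{rad}-\Lambda_\star$ on the complement, and closing the resulting scalar equation for $\Lambda$ by a contraction-mapping argument of the same flavor as the one following~\eqref{Xi}---produces, for $\epsilon$ small, a branch $\Lambda(\epsilon)=\Lambda_\star+O(\epsilon^2)$ of simple real eigenvalues of the reduced problem. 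Unfolding the rescaling yields $\lambda_\omega=\epsilon^2\Lambda(\epsilon)=O(m-\omega)>0$, and the partner eigenvalue $-\lambda_\omega$ is automatic from the Hamiltonian symmetry of $\eubJ\eubL(\omega)$. The main technical obstacle is the singular character of the small-component elimination---it is a constraint, not a regular operator perturbation---so the entire perturbation must be carried out in the rescaled variables with uniform $H^1_r\to H^2_r$ bounds on the reduced resolvent; reality of $\Lambda(\epsilon)$ then follows from the simplicity of $\Lambda_\star$ together with the self-adjointness of $\eurL_\pm(\omega)$, in the spirit of Remark~\ref{remark-enough}.
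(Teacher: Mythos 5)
Your proposal is sound and follows the same strategy as the paper---rescale by $R=\epsilon r$ with $\epsilon=\sqrt{m^2-\omega^2}$, set $\lambda=\epsilon^2\Lambda$, identify the $\epsilon\to0$ limit with the NLS linearization $\eub{j}\eub{l}_{rad}$, and then perturb off the simple real eigenvalue $\Lambda_\star$ of Lemma~\ref{lemma-nls-k} by a Lyapunov--Schmidt/Rayleigh--Schr\"odinger argument. The one genuine structural difference is that you propose a Schur-complement elimination of the two ``lower-spinor'' unknowns ($\varphi_2,\vartheta_2$ in the notation of~\eqref{lhpm4}), reducing the $4\times4$ system to a $2\times2$ perturbed NLS eigenvalue problem $\eub{j}\eub{l}_{rad}\bm{\uprho}=\Lambda\bm{\uprho}+\epsilon^2\mathcal{R}(\epsilon,\Lambda)\bm{\uprho}$, whereas the paper keeps all four components and works with the rescaled first-order operator $\eub{A}_\Lambda$ of~\eqref{def-akk}, using Lemma~\ref{lemma-ker-a} to relate $\ker\eub{A}_\Lambda$ to $\ker(\eub{j}\eub{l}_{rad}-\Lambda)$ (your elimination at $\epsilon=0$ is precisely this correspondence). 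Both routes work. Your reduction makes the NLS connection more transparent, at the cost of an eigenvalue-parameter-dependent remainder $\mathcal{R}(\epsilon,\Lambda)$ and a verification that it is relatively compact with the required uniform bounds; the paper's route keeps the perturbation $W$ a mere zero-order multiplication operator with an easy $O(\epsilon^2)$ sup-norm bound (Lemma~\ref{lemma-bounds-w}), at the cost of carrying the two extra components. Two small notes: the $\eub{H}^{-1}$ mapping property you invoke enters the construction of the solitary wave (Lemma~\ref{lemma-xx-ff-3}), not the resolvent estimate for $\mathcal{R}$---what you actually need there is the $O(\epsilon^2)$ coefficient bounds coming from that lemma, i.e.\ the analogue of Lemma~\ref{lemma-bounds-w}. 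And the nondegeneracy pairing you need in the reduced problem, $\langle\psi_\star,\phi_\star\rangle\ne0$ for the eigen- and co-eigenvector of $\eub{j}\eub{l}_{rad}$, is exactly the content of the paper's Lemma~\ref{lemma-nondegen} (both reduce to $2\Re\langle\Phi_3,\Phi_1\rangle\ne0$); invoking the algebraic simplicity asserted in Lemma~\ref{lemma-vk} is equivalent, but it is worth stating that one is relying on algebraic, not merely geometric, simplicity. The paper also gets reality of the fixed point for free by running the contraction in $\R\times L^2_r$, whereas you appeal to Hamiltonian symmetry plus simplicity---both are fine.
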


\begin{proof}
The relation
$
\begin{bmatrix}0&\eurL\sb{-}\\-\eurL\sb{+}&0\end{bmatrix}
\begin{bmatrix}\varphi\\\vartheta\end{bmatrix}
=
\lambda
\begin{bmatrix}\varphi\\\vartheta\end{bmatrix}
$,
with
$\varphi,\,\vartheta\in\C^2$,
can be written explicitly as follows:
\begin{equation}
\begin{bmatrix}
-\lambda & 0 & m-\omega-f & \p_r + \frac{n-1}{r}
\\
0 & -\lambda & -\p_r & f-m-\omega
\\
\omega-m+f+2f'v^2 &
-\p_r-2f'v u & -\lambda & 0
\\
\p_r-2f'v u & m+\omega-f +2 f'u^2 & 0 & -\lambda
\end{bmatrix}
\begin{bmatrix}\varphi\sb 1\\\varphi\sb 2\\\vartheta\sb 1\\\vartheta\sb 2\end{bmatrix}
=0.
\end{equation}
We divide the first and the third rows by $\epsilon^2= m^2 - \omega^2$,
the second and the fourth rows by $\epsilon$,
and substitute $R=\epsilon r$,
$\varphi\sb 2=\epsilon\varPhi\sb 2$,
$\vartheta\sb 2=\epsilon\varTheta\sb 2$,
to get
\begin{equation}\label{lhpm4}
\begin{bmatrix}
-\frac{\lambda}{\epsilon^2} &0&\frac{m-\omega-f}{\epsilon^2} & \p_R + \frac{n-1}{R}
\\
0 & -\lambda & -\p_R &-m-\omega
\\
\frac{\omega-m+f+2f'v^2}{\epsilon^2}
&-(\p_R + \frac{n-1}{R}) -\frac{2f'v u}{\epsilon}
&-\frac{\lambda}{\epsilon^2} & 0
\\
\p_R-\frac{1}{\epsilon}2f'v u &
m+\omega-f +2f'u^2
& 0 & -\lambda
\end{bmatrix}
\begin{bmatrix}\varphi\sb 1\\ \varPhi\sb 2\\\vartheta\sb 1\\
\varTheta\sb 2\end{bmatrix}
=0.
\end{equation}
Anticipating the $\epsilon \to 0$ limit, formally
set $\Lambda=\lim\limits\sb{\epsilon\to 0}\frac{\lambda}{\epsilon^2}$,
and introduce the matrices
\begin{equation}\label{def-akk}
\eub{A}\sb\Lambda=\begin{bmatrix}
-\Lambda
&0& \frac{1}{2m}-\hat{V}^{2k}(R) &
\p_R + \frac{n-1}{R}
\\
0&0&-\p_R&-2m
\\
-\frac{1}{2m} +(2k+1)\hat{V}^{2k}(R)
&-(\p_R + \frac{n-1}{R}) &-\Lambda &0
\\
\p_R &2m &0&0
\end{bmatrix},
\end{equation}
\begin{equation}\label{def-kk}
\eub{K}\sb 1=\mathop{\rm diag}[1,0,1,0],
\qquad
\eub{K}\sb 2=\mathop{\rm diag}[0,1,0,1],
\end{equation}
where $\hat{V}(R)$, the NLS ground state, was introduced in \eqref{Vhatdef}.
We write
\eqref{lhpm4} in the form
\begin{equation}\label{lhpm5}
\eub{A}\sb\Lambda
\eta =
\Big(\frac{\lambda}{\epsilon^2}-\Lambda\Big)\eub{K}\sb 1\eta
+\lambda \eub{K}\sb 2\eta
+W\eta,
\qquad
\eta=\begin{bmatrix}\varphi\sb 1\\\varPhi\sb 2\\\vartheta\sb 1\\\varTheta\sb 2\end{bmatrix}
\in\C^{4},
\end{equation}
where
$W(R,\epsilon)$
is a zero order differential operator with $L\sp\infty$ coefficients.

\begin{lemma}\label{lemma-bounds-w}
$
\norm{W(\cdot,\epsilon)}\sb{L\sp\infty(\R^+,\C^{4}\to\C^{4})}
\le O(\epsilon^{2}).
$
\end{lemma}
\begin{proof}
By Lemma~\ref{lemma-xx-ff-3}
(and the Sobolev inequality), one has
\[
v(r,\omega)
=\epsilon^{\frac 1 k}V(\epsilon r)
=\epsilon^{\frac 1 k}
(\hat{V}(\epsilon r) + \tilde{V}(\epsilon r,\epsilon)),
\]
\[
u(r,\omega)
=
\epsilon^{1+\frac 1 k}U(\epsilon r)
=\epsilon^{1+\frac 1 k}
(\hat{U}(\epsilon r) + \tilde{U}(\epsilon r,\epsilon)),
\]
\[
\| \tilde{V} \|_{L^\infty} +
\| \tilde{U} \|_{L^\infty} = O(\epsilon^2).
\]
Then
\[
f(v^2-u^2) =\epsilon^{2}(V^2 - U^2)^k,
\]
\[
f'(v^2-u^2) = k \epsilon^{2 - \frac 2 k}(V^2 - \epsilon^2 U^2)^{k-1},
\]
\[
\begin{split}
-\frac{1}{2m}
+\frac{m - \omega-f - 2 f' v^2}{\epsilon ^2}
&
=
O(\epsilon^2) - (V^2-\epsilon^2 U^2)^k - 2k V^2(V^2-\epsilon^2 U^2)^{k-1}
\\ &
=
- (1+2k)\hat{V}^{2k}
+ O_{L^\infty}(\epsilon^2),
\end{split}
\]
\[
\frac{f' v u}{\epsilon} =
k \epsilon^2 UV(V^2-\epsilon U^2)^{k-1}
= O_{L^\infty}(\epsilon^2),
\]
\[
m-\omega+f = O(\epsilon^2) +
\epsilon^2(V^2-\epsilon^2 U^2)^k
= O_{L^\infty}(\epsilon^2),
\]
\[
f' u^2 = k \epsilon^4 U^2 (V^2 - \epsilon U^2)^{k-1}
= O_{L^\infty}(\epsilon^4),
\]
and
\[
\frac{1}{2m}
+
\frac{\omega-m+f}{\epsilon^2}
=
+ O(\epsilon^2) +
(V^2 - \epsilon^2 U^2)^k
=
\hat{V}^{2k} + O_{L^\infty}
(\epsilon^2),
\]
and the Lemma follows directly from this list of
estimates.
\end{proof}

\begin{lemma}\label{lemma-ker-a}
$\dim\ker \eub{A}\sb\Lambda =
\dim \ker (\eub{j} \eub{l}_{rad} - \Lambda)$,
where
\[
\eub{j}\eub{l}_{rad}=\begin{bmatrix}
0 & \eur{l}_{+,rad} \\ -\eur{l}_{-,rad} & 0 \end{bmatrix},
\]
and where, we recall,
\[
\eur{l}_{-,rad} = -\frac{1}{2m}(\p_R + \frac{n-1}{R})
\p_R +\frac{1}{2m}-\hat{V}(R)^{2k},
\]
\[
\eur{l}_{+,rad} = -\frac{1}{2m} (\p_R + \frac{n-1}{R})
\p_R  + \frac{1}{2m} - (2k+1)\hat{V}(R)^{2k}.
\]
Moreover, if $k > n/2$
($k=1$ if $n=3$),
there is $\Lambda>0$ such that
$\pm\Lambda\in\sigma\sb{d}(\eub{j}\eub{H})$
are simple eigenvalues.
Here
$\eub{H}$ is the operator defined in \eqref{def-h-large}.
\end{lemma}

\begin{proof}
An easy computation shows that
$
  \Phi = \left[ \begin{array}{c}
  \Phi_1 \\ \Phi_2 \\ \Phi_3 \\ \Phi_4
  \end{array} \right] \in \ker \eub{A}_\Lambda
$
if and only if
\[
  2m\Phi_2 = -\p\sb R\Phi_1,
\quad
  2m\Phi_4 = -\p\sb R\Phi_3,
\quad
  (\eub{j} \eub{l}_{rad} - \Lambda)
 \left[ \begin{array}{c} \Phi_3 \\ -\Phi_1 \end{array}
  \right] = 0,
\]
and the first statement of the Lemma
follows from this observation.
The second statement then follows from
Lemma~\ref{lemma-nls-k}.
\end{proof}

So we may assume that there is $\Lambda>0$
such that $\pm \Lambda \in \sigma_d (\eub{A}_\Lambda)$, with eigenfunctions
\[
  \ker \eub{A}_{\pm \Lambda} \ni
  \Phi_{\pm \Lambda} = \left[  \begin{array}{c}
  \pm \Phi_{1} \\ \mp \frac{1}{2m} \p\sb R\Phi_1 \\
  \Phi_3 \\  -\frac{1}{2m}\p\sb R\Phi_{3}
  \end{array} \right], \;\;
  \eur{l}_{+,rad} \Phi_1 = -\Lambda \Phi_3, \;\;
  \eur{l}_{-,rad} \Phi_3  = \Lambda \Phi_1.
\]
We will use the Rayleigh--Schr\"odinger perturbation theory
to show that there are eigenvalues
$\pm\lambda\in\sigma\sb{d}(\eubJ\eubL)$
with $\lambda=\epsilon^2\Lambda+o(\epsilon^2)$.

Writing
\[
  \eub{A}_\Lambda = \eubJ \eubL_0  -
  \Lambda \eub{K}_1, \quad
  \eubL_0 = \begin{bmatrix}
  \eurL_{0,+} & 0 \\ 0 & \eurL_{0,-} \end{bmatrix},
\]
with
\[
  \eurL_{0,+} = \begin{bmatrix}
  \frac{1}{2m} - (2k+1) \hat{V}^{2k} & \p_R + \frac{n-1}{R} \\ -\p_R & -2m \end{bmatrix},
  \quad \eurL_{0,-} = \begin{bmatrix}
  \frac{1}{2m} - \hat{V}^{2k} & \p_R + \frac{n-1}{R}
  \\ -\p_R & -2m \end{bmatrix}
\]
self-adjoint, we see that
\[
  \eub{A}_\Lambda^* = - \eubL_0 \eubJ
  - \Lambda \eub{K}_1 = \eub{F} \eub{A}_\Lambda \eub{F}, \quad
  \eub{F} := \begin{bmatrix} 0 & I_2 \\
  I_2 & 0 \end{bmatrix}.
\]
Hence $\ker \eub{A}^*_\Lambda$ is spanned by $\Phi_\Lambda^*  := \eub{F} \Phi_\Lambda$.
Let $\eub{P}_\Lambda$
denote the orthogonal projection onto $\Phi_\Lambda^*$.


Seeking $\eta$ and $\lambda$ in the form
\[
  \eta=\Phi\sb\Lambda+\zeta, \quad
  \zeta \perp \Phi_\Lambda, \qquad
  \lambda = \epsilon^2(\Lambda + \mu),
\]
then \eqref{lhpm5} becomes
\begin{equation}
\label{eveq}
  \eub{A}_\Lambda \zeta =
  \mu \eub{K}_1 ( \Phi_\Lambda + \zeta ) +
  \epsilon^2 (\Lambda + \mu) \eub{K}_2
  (\Phi_\Lambda + \zeta) + W (\Phi_\Lambda
  + \zeta).
\end{equation}

Applying $\eub{P}_\Lambda$ and $1- \eub{P}_\Lambda$ to \eqref{eveq}, one has:
\begin{equation}\label{eq1}
0= \mu \langle \Phi\sb\Lambda^*,\eub{K}\sb 1(\Phi_\Lambda + \zeta)\rangle
+ \epsilon^2(\Lambda + \mu) \langle \Phi\sb\Lambda^*,\eub{K}\sb 2(\Phi_\Lambda+\zeta)\rangle
+\langle \Phi\sb\Lambda^*,W(\Phi+\zeta)\eta\rangle,
\end{equation}
\begin{equation}\label{eq2}
\eub{A}_\Lambda \zeta= (1-\eub{P}\sb\Lambda)
\Big( \mu \eub{K}\sb 1
+ \epsilon^2(\Lambda + \mu) \eub{K}\sb 2
+W \Big)(\Phi_\Lambda+\zeta).
\end{equation}

\begin{lemma}\label{lemma-nondegen}
\[
\langle \Phi\sb\Lambda^*, \eub{K}\sb 1\Phi\sb\Lambda \rangle \ne 0.
\]
\end{lemma}

\begin{proof}
Note that
\[
  \langle \Phi_\Lambda^*, \eub{K}_1 \Phi_\Lambda \rangle=
  \langle \eub{F} \Phi_\Lambda, \eub{K}_1
  \Phi_\Lambda \rangle =
  2 \Re \langle \Phi_3, \Phi_1 \rangle.
\]
Now the fact that
$\Phi_\Lambda \in \ker \eub{A}_\Lambda$
means in particular that
$L_{-} \Phi_3 = \Lambda \Phi_1$.
Hence, since $L_{-}$ is self-adjoint,
\[
  \R \ni \langle \Phi_3, L_{-} \Phi_3 \rangle
  = \Lambda \langle \Phi_3, \Phi_1 \rangle,
\]
and so
$\Re \langle \Phi_3, \Phi_1 \rangle= 0$
 only if $\langle \Phi_3, L_{-} \Phi_3 \rangle =0$. As is well-known, since
$L_{-} \hat{V} = 0$ and $\hat{V}(r) > 0$,
we have $L_{-} \geq 0$. Thus
$\Re \langle \Phi_3, \Phi_1 \rangle= 0$
only if $L_{-} \Phi_3 = 0$. This, in turn,
would imply that either $\Lambda = 0$
or $\Phi_\Lambda = 0$, both of which
are false.
This finishes the proof.
\end{proof}

Denote by $L^2_r(\R^n,\C^4)\subset L^2(\R^n,\C^4)$
the subspace of spherically symmetric functions.
Now using~Lemma~\ref{lemma-nondegen}
and
the existence of the bounded inverse $\eub{A}_\Lambda^{-1} : \range (1 - \eub{P}_\Lambda) \to \Phi_\Lambda^{\perp}$,
equations \eqref{eq1}, \eqref{eq2} can be written as
\[
\mu=M(\mu,\zeta),
\qquad
\zeta=Z(\mu,\zeta),
\]
with functions
$
M:\R\times L^2_r \to\R, \;\;
$
$
Z:\R\times L^2_r \to L^2_r
$
given by
\begin{equation}\label{def-map-m}
M(\mu,\zeta)=
-\frac{1}
{\langle \Phi\sb\Lambda^*,\eub{K}\sb 1\Phi\sb\Lambda\rangle}
\Big[
\mu
\langle \Phi\sb\Lambda^*,\eub{K}\sb 1\zeta\rangle
+
\langle \Phi_\Lambda^*,
\epsilon^2
(\Lambda+\mu)
\eub{K}\sb 2(\Phi\sb\Lambda+\zeta)
+ W \rangle \Big],
\end{equation}
\begin{equation}\label{def-map-z}
Z(\mu,\zeta)=
\eub{A}\sb\Lambda^{-1}(1-\eub{P}\sb\Lambda)
\Big(
\mu \eub{K}\sb 1
+\epsilon^2(\Lambda+\mu)\eub{K}\sb 2
+W
\Big)(\Phi\sb\Lambda+\zeta).
\end{equation}

Pick $\Gamma\ge 1$ such that
\begin{equation}\label{def-gamma}
\Gamma
\ge 2\norm{\eub{A}\sb\Lambda^{-1}(1-\eub{P}\sb\Lambda)\eub{K}\sb 1\Phi\sb\Lambda}\sb{L\sp 2}.
\end{equation}

\begin{lemma}\label{lemma-mz-contraction}
Consider $\R\times L^2_r$
endowed with the metric
\[
\norm{(\mu,\zeta)}\sb{\Gamma}
=\Gamma\abs{\mu}+\norm{\zeta}\sb{L^2}.
\]
There is $\omega\sb 1\in(\omega\sb 0,m)$
such that
for $\omega\in(\omega\sb 1,m)$
the map
\begin{equation}\label{def-m-z}
M\times Z:\;\;\R\times L^2_r \to \R\times L^2_r,
\qquad
(\mu,\zeta)\mapsto\big(M(\mu,\zeta),Z(\mu,\zeta)\big),
\end{equation}
restricted onto the set
\[
\mathcal{B}\sb{\epsilon}
=\{(\mu,\zeta)\in\R\times L^2(\R,\C^{4})
\sothat\norm{(\mu,\zeta)}\sb{\Gamma}\le\epsilon \}
\subset\R\times L^2_r
\]
is an endomorphism and a contraction
with respect to $\norm{\cdot}\sb{\Gamma}$.
\end{lemma}

\begin{proof}
Assuming $\Gamma |\mu| + \| \zeta \|_{L^2}
\leq \epsilon < 1$, and using
Lemma~\ref{lemma-bounds-w}, we have the estimates
\[
  |M(\mu,\zeta)| \leq C \left( |\mu| \|\zeta\|_{L^2}
  +\epsilon^2(1+|\mu|)( 1 + \| \zeta \|_{L^2})
  + \| W \|_{L^\infty}  \right) \leq C \epsilon^2,
\]
\[
\begin{split}
&\| Z(\mu,\zeta) \|_{L^2}
\\
&
\leq
\frac{1}{2} \Gamma |\mu| +
  C \left( |\mu| \|\zeta\|_{L^2}
  + \epsilon^2(1 + |\mu|)(1 + \|\zeta\|_{L^2})
  + \| W \|_{L^\infty} (1 + \|\zeta\|_{L^2})
  \right)
\\
&
\qquad
\leq \frac{1}{2} \epsilon
  + C \epsilon^2,
\end{split}
\]
which show that for all sufficiently small $\epsilon$, $M \times Z$ maps
$\mathcal{B}_\epsilon$ into itself.
Similar estimates  show that
$(M\times Z)\at{\mathcal{B}\sb{\epsilon}}$
is a contraction
in the metric $\norm{\cdot}\sb{\Gamma}$.
\end{proof}

According to Lemma~\ref{lemma-mz-contraction},
by the contraction mapping theorem,
the map
\eqref{def-m-z}
has a unique fixed point
$
(\mu\sb 0(\omega),\zeta\sb 0(\omega))
\in
\mathcal{B}\sb{\epsilon}
\subset\R\times L^2_r
$
(as long as $\omega\in(\omega\sb 1,m)$).
Thus, we have
$
\pm\epsilon^2(\Lambda+\mu\sb 0(\omega))
\in\sigma\sb p(\eubJ\eubL(\omega)),
$
$
\omega\in(\omega\sb 1,m),
$
with $\Gamma\abs{\mu\sb 0(\omega)}\le\epsilon$,
finishing the proof of the proposition.
\end{proof}

By Remark~\ref{remark-enough},
Proposition~\ref{prop-k3}
finishes the proof of Theorem~\ref{main-theorem-dirac-vbk}.

\bibliographystyle{mfo-doi}

\bibliography{all}

\newcommand{\etalchar}[1]{$^{#1}$}
\def\cprime{$'$} \def\polhk#1{\setbox0=\hbox{#1}{\ooalign{\hidewidth
  \lower1.5ex\hbox{`}\hidewidth\crcr\unhbox0}}}
\begin{thebibliography}{CKMS10}

\bibitem[AS83]{PhysRevLett.50.1230}
A.~Alvarez and M.~Soler,
  \href{http://link.aps.org/doi/10.1103/PhysRevLett.50.1230}{{\em Energetic
  stability criterion for a nonlinear spinorial model\/}}, Phys. Rev. Lett.
  {\bf 50} (1983), pp. 1230--1233.
\newblock \doi{10.1103/PhysRevLett.50.1230}.

\bibitem[AS86]{PhysRevD.34.644}
A.~Alvarez and M.~Soler,
  \href{http://link.aps.org/doi/10.1103/PhysRevD.34.644}{{\em Stability of the
  minimum solitary wave of a nonlinear spinorial model\/}}, Phys. Rev. D {\bf
  34} (1986), pp. 644--645.
\newblock \doi{10.1103/PhysRevD.34.644}.

\bibitem[BC12a]{MR2892774}
G.~Berkolaiko and A.~Comech,
  \href{http://dx.doi.org/10.1051/mmnp/20127202}{{\em On spectral stability of
  solitary waves of nonlinear {D}irac equation in 1{D}\/}}, Math. Model. Nat.
  Phenom. {\bf 7} (2012), pp. 13--31.
\newblock \doi{10.1051/mmnp/20127202}.

\bibitem[BC12b]{dirac-spectrum}
N.~Boussaid and A.~Comech, {\em On spectral stability of nonlinear {D}irac
  equation\/}, ArXiv e-prints  (2012).
\newblock \eprint{1211.3336}.

\bibitem[BC12c]{MR2924465}
N.~Boussaid and S.~Cuccagna,
  \href{http://dx.doi.org/10.1080/03605302.2012.665973}{{\em On stability of
  standing waves of nonlinear {D}irac equations\/}}, Comm. Partial Differential
  Equations {\bf 37} (2012), pp. 1001--1056.
\newblock \doi{10.1080/03605302.2012.665973}.

\bibitem[BL83]{MR695535}
H.~Berestycki and P.-L. Lions, \href{http://dx.doi.org/10.1007/BF00250555}{{\em
  Nonlinear scalar field equations. {I}. {E}xistence of a ground state\/}},
  Arch. Rational Mech. Anal. {\bf 82} (1983), pp. 313--345.
\newblock \doi{10.1007/BF00250555}.

\bibitem[Bog79]{MR592382}
I.~L. Bogolubsky, \href{http://dx.doi.org/10.1016/0375-9601(79)90442-0}{{\em On
  spinor soliton stability\/}}, Phys. Lett. A {\bf 73} (1979), pp. 87--90.
\newblock \doi{10.1016/0375-9601(79)90442-0}.

\bibitem[Bou06]{MR2259214}
N.~Boussaid, \href{http://dx.doi.org/10.1007/s00220-006-0112-3}{{\em Stable
  directions for small nonlinear {D}irac standing waves\/}}, Comm. Math. Phys.
  {\bf 268} (2006), pp. 757--817.
\newblock \doi{10.1007/s00220-006-0112-3}.

\bibitem[Bou08]{MR2466169}
N.~Boussaid, \href{http://dx.doi.org/10.1137/070684641}{{\em On the asymptotic
  stability of small nonlinear {D}irac standing waves in a resonant case\/}},
  SIAM J. Math. Anal. {\bf 40} (2008), pp. 1621--1670.
\newblock \doi{10.1137/070684641}.

\bibitem[{Chu}08]{chugunova-thesis}
M.~{Chugunova}, {\em Spectral stability of nonlinear waves in dynamical systems
  ({D}octoral {T}hesis)\/}, McMaster University, Hamilton, Ontario, Canada,
  2008.

\bibitem[CKMS10]{PhysRevE.82.036604}
F.~Cooper, A.~Khare, B.~Mihaila, and A.~Saxena, {\em Solitary waves in the
  nonlinear {D}irac equation with arbitrary nonlinearity\/}, Phys. Rev. E {\bf
  82} (2010), p. 036604.
\newblock \doi{10.1103/PhysRevE.82.036604}.

\bibitem[Com11]{dirac-vk}
A.~Comech, {\em {On the meaning of the {V}akhitov-{K}olokolov stability
  criterion for the nonlinear {D}irac equation}\/}, ArXiv e-prints  (2011).
\newblock \eprint{1107.1763}.

\bibitem[CP03]{MR1995870}
A.~Comech and D.~Pelinovsky, \href{http://dx.doi.org/10.1002/cpa.10104}{{\em
  Purely nonlinear instability of standing waves with minimal energy\/}}, Comm.
  Pure Appl. Math. {\bf 56} (2003), pp. 1565--1607.
\newblock \doi{10.1002/cpa.10104}.

\bibitem[CP06]{MR2217129}
M.~Chugunova and D.~Pelinovsky, \href{http://dx.doi.org/10.1137/050629781}{{\em
  Block-diagonalization of the symmetric first-order coupled-mode system\/}},
  SIAM J. Appl. Dyn. Syst. {\bf 5} (2006), pp. 66--83.
\newblock \doi{10.1137/050629781}.

\bibitem[Cuc09]{MR2571965}
S.~Cuccagna, \href{http://dx.doi.org/10.1016/j.physd.2008.08.010}{{\em On
  instability of excited states of the nonlinear {S}chr\"odinger equation\/}},
  Phys. D {\bf 238} (2009), pp. 38--54.
\newblock \doi{10.1016/j.physd.2008.08.010}.

\bibitem[{Cuc}12]{2012arXiv1203.2120C}
S.~{Cuccagna}, {\em {On the Darboux and Birkhoff steps in the asymptotic
  stability of solitons}\/}, ArXiv e-prints  (2012).
\newblock \eprint{1203.2120}.

\bibitem[CV86]{MR847126}
T.~Cazenave and L.~V{\'a}zquez,
  \href{http://projecteuclid.org/getRecord?id=euclid.cmp/1104115255}{{\em
  Existence of localized solutions for a classical nonlinear {D}irac field\/}},
  Comm. Math. Phys. {\bf 105} (1986), pp. 35--47.

\bibitem[{Dir}28]{1928RSPSA.117..610D}
P.~A.~M. {Dirac}, {\em {The Quantum Theory of the Electron}\/}, Royal Society
  of London Proceedings Series A {\bf 117} (1928), pp. 610--624.
\newblock \doi{10.1098/rspa.1928.0023}.

\bibitem[ELS08]{MR2434346}
M.~J. Esteban, M.~Lewin, and E.~S{\'e}r{\'e},
  \href{http://dx.doi.org/10.1090/S0273-0979-08-01212-3}{{\em Variational
  methods in relativistic quantum mechanics\/}}, Bull. Amer. Math. Soc. (N.S.)
  {\bf 45} (2008), pp. 535--593.
\newblock \doi{10.1090/S0273-0979-08-01212-3}.

\bibitem[ES95]{MR1344729}
M.~J. Esteban and {\'E}.~S{\'e}r{\'e},
  \href{http://projecteuclid.org/getRecord?id=euclid.cmp/1104273565}{{\em
  Stationary states of the nonlinear {D}irac equation: a variational
  approach\/}}, Comm. Math. Phys. {\bf 171} (1995), pp. 323--350.
\newblock \doi{10.1007/BF02099273}.

\bibitem[GN74]{PhysRevD.10.3235}
D.~J. Gross and A.~Neveu, {\em Dynamical symmetry breaking in asymptotically
  free field theories\/}, Phys. Rev. D {\bf 10} (1974), pp. 3235--3253.
\newblock \doi{10.1103/PhysRevD.10.3235}.

\bibitem[GO12]{MR2916078}
V.~Georgiev and M.~Ohta,
  \href{http://projecteuclid.org/getRecord?id=euclid.jmsj/1335444402}{{\em
  Nonlinear instability of linearly unstable standing waves for nonlinear
  {S}chr\"odinger equations\/}}, J. Math. Soc. Japan {\bf 64} (2012), pp.
  533--548.

\bibitem[Gro66]{MR0190520}
L.~Gross, {\em The {C}auchy problem for the coupled {M}axwell and {D}irac
  equations\/}, Comm. Pure Appl. Math. {\bf 19} (1966), pp. 1--15.
\newblock \doi{10.1002/cpa.3160190102}.

\bibitem[GSS87]{MR901236}
M.~Grillakis, J.~Shatah, and W.~Strauss,
  \href{http://dx.doi.org/10.1016/0022-1236(87)90044-9}{{\em Stability theory
  of solitary waves in the presence of symmetry. {I}\/}}, J. Funct. Anal. {\bf
  74} (1987), pp. 160--197.
\newblock \doi{10.1016/0022-1236(87)90044-9}.

\bibitem[{Gua}08]{2008arXiv0812.2273G}
M.~{Guan}, {\em {Solitary Wave Solutions for the Nonlinear Dirac Equations}\/},
  ArXiv e-prints  (2008).
\newblock \eprint{0812.2273}.

\bibitem[HC09]{MR2542748}
L.~H. Haddad and L.~D. Carr,
  \href{http://dx.doi.org/10.1016/j.physd.2009.02.001}{{\em The nonlinear
  {D}irac equation in {B}ose-{E}instein condensates: foundation and
  symmetries\/}}, Phys. D {\bf 238} (2009), pp. 1413--1421.
\newblock \doi{10.1016/j.physd.2009.02.001}.

\bibitem[LG75]{PhysRevD.12.3880}
S.~Y. Lee and A.~Gavrielides, {\em Quantization of the localized solutions in
  two-dimensional field theories of massive fermions\/}, Phys. Rev. D {\bf 12}
  (1975), pp. 3880--3886.
\newblock \doi{10.1103/PhysRevD.12.3880}.

\bibitem[Mer88]{MR949625}
F.~Merle, \href{http://dx.doi.org/10.1016/0022-0396(88)90018-6}{{\em Existence
  of stationary states for nonlinear {D}irac equations\/}}, J. Differential
  Equations {\bf 74} (1988), pp. 50--68.
\newblock \doi{10.1016/0022-0396(88)90018-6}.

\bibitem[MJZ{\etalchar{+}}10]{PhysRevLett.104.073603}
M.~Merkl, A.~Jacob, F.~E. Zimmer, P.~\"Ohberg, and L.~Santos, {\em Chiral
  confinement in quasirelativistic {B}ose-{E}instein condensates\/}, Phys. Rev.
  Lett. {\bf 104} (2010), p. 073603.
\newblock \doi{10.1103/PhysRevLett.104.073603}.

\bibitem[Pau36]{MR1508031}
W.~Pauli, \href{http://www.numdam.org/item?id=AIHP_1936__6_2_109_0}{{\em
  Contributions math\'ematiques \`a la th\'eorie des matrices de {D}irac\/}},
  Ann. Inst. H. Poincar\'e {\bf 6} (1936), pp. 109--136.

\bibitem[Poh65]{MR0192184}
S.~I. Poho{\v{z}}aev, {\em On the eigenfunctions of the equation {$\Delta
  u+\lambda f(u)=0$}\/}, Dokl. Akad. Nauk SSSR {\bf 165} (1965), pp. 36--39.

\bibitem[PS12]{MR2985264}
D.~E. Pelinovsky and A.~Stefanov,
  \href{http://dx.doi.org/10.1063/1.4731477}{{\em Asymptotic stability of small
  gap solitons in nonlinear {D}irac equations\/}}, J. Math. Phys. {\bf 53}
  (2012), pp. 073705, 27.
\newblock \doi{10.1063/1.4731477}.

\bibitem[RS78]{MR0493421}
M.~Reed and B.~Simon, {\em Methods of modern mathematical physics. {I}{V}.
  {A}nalysis of operators\/}, Academic Press [Harcourt Brace Jovanovich
  Publishers], New York, 1978.

\bibitem[RS84]{rebbi1984solitons}
C.~Rebbi and G.~Soliani, {\em Solitons and Particles: Papers\/}, World
  Scientific Pub., 1984.

\bibitem[Sha83]{MR723756}
J.~Shatah,
  \href{http://projecteuclid.org/getRecord?id=euclid.cmp/1103940612}{{\em
  Stable standing waves of nonlinear {K}lein-{G}ordon equations\/}}, Comm.
  Math. Phys. {\bf 91} (1983), pp. 313--327.

\bibitem[Sol70]{PhysRevD.1.2766}
M.~Soler, {\em Classical, stable, nonlinear spinor field with positive rest
  energy\/}, Phys. Rev. D {\bf 1} (1970), pp. 2766--2769.
\newblock \doi{10.1103/PhysRevD.1.2766}.

\bibitem[SS85]{MR804458}
J.~Shatah and W.~Strauss,
  \href{http://projecteuclid.org/getRecord?id=euclid.cmp/1103943442}{{\em
  Instability of nonlinear bound states\/}}, Comm. Math. Phys. {\bf 100}
  (1985), pp. 173--190.

\bibitem[Str77]{MR0454365}
W.~A. Strauss, \href{http://projecteuclid.org/euclid.cmp/1103900983}{{\em
  Existence of solitary waves in higher dimensions\/}}, Comm. Math. Phys. {\bf
  55} (1977), pp. 149--162.

\bibitem[SV86]{MR848095}
W.~A. Strauss and L.~V{\'a}zquez,
  \href{http://dx.doi.org/10.1103/PhysRevD.34.641}{{\em Stability under
  dilations of nonlinear spinor fields\/}}, Phys. Rev. D (3) {\bf 34} (1986),
  pp. 641--643.
\newblock \doi{10.1103/PhysRevD.34.641}.

\bibitem[Tha92]{thaller}
B.~Thaller, {\em The {D}irac equation\/}, Texts and Monographs in Physics,
  Springer-Verlag, Berlin, 1992.

\bibitem[Thi58]{MR0091788}
W.~E. Thirring, \href{http://dx.doi.org/10.1016/0003-4916(58)90015-0}{{\em A
  soluble relativistic field theory\/}}, Ann. Physics {\bf 3} (1958), pp.
  91--112.
\newblock \doi{10.1016/0003-4916(58)90015-0}.

\bibitem[vdW32]{vanderwaerden-1932}
B.~L. van~der Waerden, {\em {D}ie gruppentheoretische {M}ethode in der
  {Q}uantenmechanik\/}, Springer-Verlag, Berlin, 1932.

\bibitem[VK73]{VaKo}
N.~G. Vakhitov and A.~A. Kolokolov, {\em Stationary solutions of the wave
  equation in the medium with nonlinearity saturation\/}, Radiophys. Quantum
  Electron. {\bf 16} (1973), pp. 783--789.
\newblock \doi{10.1007/BF01031343}.

\bibitem[Wei85]{MR783974}
M.~I. Weinstein, \href{http://dx.doi.org/10.1137/0516034}{{\em Modulational
  stability of ground states of nonlinear {S}chr\"odinger equations\/}}, SIAM
  J. Math. Anal. {\bf 16} (1985), pp. 472--491.
\newblock \doi{10.1137/0516034}.

\end{thebibliography}
\end{document}